\documentclass[reqno,11pt]{amsart}
\usepackage{a4wide,color,eucal,enumerate,mathrsfs}
\usepackage[normalem]{ulem}
\usepackage{amsmath,amssymb,epsfig,amsthm} 
\usepackage[latin1]{inputenc}
\usepackage{psfrag}
\usepackage{hyperref,cleveref}
\usepackage{xcolor}

\definecolor{lightseagreen}{rgb}{0.13, 0.7, 0.67}
\definecolor{darkred}{rgb}{0.55, 0.0, 0.0}

\usepackage{pgfplots}
\pgfplotsset{compat=1.18}
\usepgfplotslibrary{polar}
\usetikzlibrary{intersections}
\usetikzlibrary{positioning}

\newcommand{\Ncal}{\mathcal{N}}

\def\dist{{\mathop\mathrm{dist}}}
\def\loc{{\mathop\mathrm{\,loc\,}}}

\def\ez{\epsilon}

\def\bint{{\ifinner\rlap{\bf\kern.35em--}
\int\else\rlap{\bf\kern.45em--}\int\fi}\ignorespaces}

\def\bbint{{\ifinner\rlap{\bf\kern.35em--}
\hspace{0.078cm}\int\else\rlap{\bf\kern.45em--}\int\fi}\ignorespaces}

\def\esssup{{\rm \,esssup\,}}

\def\diam{{\mathop\mathrm{\,diam\,}}}

\def\dfrac{\displaystyle\frac}

\newtheorem{thm}{Theorem}[section]
\newtheorem{lem}[thm]{Lemma}
\newtheorem{prop}[thm]{Proposition}
\newtheorem{defn}[thm]{Definition}
\numberwithin{equation}{section}

\theoremstyle{remark}
\newtheorem{rem}[thm]{Remark}

\def\bint{{\ifinner\rlap{\bf\kern.35em--}
\int\else\rlap{\bf\kern.45em--}\int\fi}\ignorespaces}

\usepackage{graphicx}
\usepackage{import}
\usepackage{xifthen}
\usepackage{pdfpages}
\usepackage{transparent}

\title[An anisotropic Serrin's problem in general domains]{An anisotropic Serrin's problem in general domains}
\author{Alessio Figalli and Yi Ru-Ya Zhang}
\date{\today}

\address{ETH Z\"urich, Department of Mathematics, R\"amistrasse 101, 8092, Z\"urich, Switzerland}
\email{alessio.figalli@math.ethz.ch}  
\address{State Key Laboratory of Mathematical Sciences, Academy of Mathematics and Systems Science, Chinese Academy of Sciences, Beijing 100190, China}
\address{Institute of Mathematics, Academy of Mathematics and Systems Science, the Chinese Academy of Sciences, Beijing 100190, China}
\email{yzhang@amss.ac.cn}

 \thanks{The second author is funded by the National Key R\&D Program of China (Grant No. 2025YFA1018400 \&  No. 2021YFA1003100), NSFC grant No. 12288201 \& No. 12571128, the Chinese Academy of Sciences, and CAS Project for Young Scientists in Basic Research, Grant No. YSBR-031. }

\subjclass[2020]{35N25, 35J62}
\keywords{Overdetermined problems, maximum principle, sets of finite perimeter.}

\begin{document}

\begin{abstract}
Serrin's symmetry theorem shows that the classical overdetermined torsion problem forces the
domain to be a ball. Extending this rigidity statement to merely Lipschitz (and more generally
rough) domains in the weak formulation has been a long-standing and challenging problem,
recently resolved by the authors in~\cite{FZ2025}.

In this paper we address the corresponding question in the
anisotropic setting: Given a uniformly convex $C^{2,\gamma}$ anisotropy $H$, we study the
overdetermined problem for the anisotropic Laplacian
$\Delta_H u={\rm div}\big(H(\nabla u)\,DH(\nabla u)\big)$ on a bounded indecomposable set
of finite perimeter $\Omega$. Assuming the Ahlfors--David regularity of $\partial^*\Omega$ and
a global $\beta$-number square-function bound (a weak uniform rectifiability hypothesis),
we prove that a weak solution exists if and only if $\Omega$ is a translate and dilation of
{the reflected Wulff shape $-K$}, in which case the solution is unique and explicit. In particular, the result
applies to Lipschitz domains. 

While our approach follows the rough-domain strategy of~\cite{FZ2025} at a high level, 
the key Laplacian-specific ingredients exploited there have no direct analog for $\Delta_H$, 
necessitating the development of new ideas and techniques.

\end{abstract}
 
\maketitle

\section{Introduction}
Let $H:\mathbb R^n\to[0,\infty)$ be a convex, $1$-homogeneous positive function, smooth away from the origin, and define the dual function $H_*$ as
\[
H_*(x):=\sup\{x\cdot y:\ H(y)=1\},\qquad x\in\mathbb R^n,
\]
and
\[
K:=\{x\in\mathbb R^n:\ H_*(x)<1\}.
\]
For a set $E$ of finite perimeter, we define the anisotropic perimeter corresponding to $H$ by
\[
P_H(E):=\int_{\partial^*E} H(-\nu_E)\,d\mathscr H^{n-1},
\]
where $\nu_E$ is the measure-theoretic outer unit normal of $E$.
Equivalently, this is the anisotropic perimeter with surface tension $F(\nu):=H(-\nu)$; with this definition,
the Wulff minimizer for $P_H$ (namely, the minimizer of $P_H$ at a given volume) is the reflected body $-K$.

Define the energy integrand and the anisotropic Laplacian as
\[
V(\xi):=\frac12 H(\xi)^2,
\qquad 
\Delta_H u := {\rm div}\bigl(DV(\nabla u)\bigr)={\rm div}\bigl(H(\nabla u)\,DH(\nabla u)\bigr),
\]
and consider the following anisotropic overdetermined system:\\
\emph{Find 
$u\in W^{1,2}_0(\Omega)$ such that
\begin{equation*}
\int_{\Omega} DV(\nabla u)\cdot \nabla\varphi \, dx
= -\mathbf{c}\int_{\partial^* \Omega} \varphi \, H(-\nu_x)\, d\mathscr{H}^{n-1}
+ \int_{\Omega} \varphi \, dx,
\qquad \forall\, \varphi\in C^1(\mathbb{R}^n),
\end{equation*}
where $\mathbf{c}=\frac{|\Omega|}{P_H(\Omega)}$.}

In terms of the anisotropic Laplacian,
the system can be equivalently restated in distributional form as
\[
u \in W^{1,2}(\mathbb R^n), \qquad u=0\ \text{a.e.\ in }\mathbb R^n\setminus \Omega,\qquad
\Delta_H u=\mathbf{c}H(-\nu)\,\mathscr{H}^{n-1}\llcorner \partial^*\Omega - \mathbf{1}_{\Omega}\,dx.
\]
The foundational case where $K=B$ (the Euclidean unit ball) and $\Omega$ is of class $C^2$ was first investigated by Serrin
\cite{S1971}, who proved that a solution exists if and only if $\Omega$ is a ball. Later, Weinberger \cite{W1971} provided an alternative proof. Building on Weinberger's approach, several other
methods have been developed; see, for instance, \cite{BNST2008, CH1998, PS1989} as well as the recent manuscript \cite{CWZ2025}.

When both $\Omega$ and $K$ are sufficiently smooth so that $u\in C^2(\overline{\Omega})$ (i.e.\ $u$ is a classical solution),
\cite{CS2009} and \cite{WX2011} independently showed that $\Omega$ must be homothetic to {the Wulff shape associated with the chosen surface tension (with the present convention $H(-\nu)$, this is $-K$)}. These arguments refine the techniques introduced in \cite{BNST2008} and \cite{W1971},
respectively.

\subsection{Serrin's theorem in general domains}

In 1992, Vogel \cite{V1992} showed that if $\Omega$ is a $C^1$ domain admitting a solution to Serrin's overdetermined problem
(for the Laplacian or for certain special degenerate operators), then $\Omega$ must  be of class $C^{2}$ and Serrin's theorem applies.
In his approach Vogel assumed the boundary behavior
\[
u(x)\to 0 \quad \text{and}\quad |\nabla u|(x)\to \mathbf{c} \ \text{ uniformly as } x\to \partial \Omega,
\]
and used the Alt--Caffarelli-type free boundary regularity theory to deduce the $C^2$-regularity of $\Omega$.

Later, in the classical case $K=B$, Berestycki raised the question of what happens if the domain is $C^2$ except at a potential corner, and $u$ is a strong
solution everywhere except at that corner. This was resolved in \cite{P1998} using an adapted moving plane
method that avoids the singular point, sparking interest in extending Serrin's theorem to a broader class of domains. Specifically,
\cite[Question 7.1]{HLL2024} asked the following in the case $K=B$:\\
\emph{Does Serrin's theorem hold if $\Omega$ is merely Lipschitz and $u$ solves the equation under the weak formulation?}

This question was recently answered in \cite{FZ2025} for an even broader class of domains. Recall that a measurable set
$\Omega\subseteq \mathbb R^n$ has \emph{finite perimeter} if the distributional gradient of its characteristic function
$\mathbf{1}_\Omega$ is a $\mathbb R^n$-valued Radon measure $D\mathbf{1}_\Omega$ with finite total variation, i.e.
$|D\mathbf{1}_\Omega|(\mathbb R^n)<\infty$.
By the Lebesgue--Besicovitch differentiation theorem, for $|D\mathbf{1}_{\Omega}|$-a.e.\ $x$,
\[
\lim_{r\to 0^+}\frac{D\mathbf{1}_\Omega(B_r(x))}{|D\mathbf{1}_\Omega|(B_r(x))} = -\nu_x,
\qquad |\nu_x| = 1.
\]
The set of such points is the \emph{reduced boundary} $\partial^*\Omega$, and $\nu_x$ is the measure-theoretic outer unit normal.
By De Giorgi's structure theorem, $\partial^*\Omega$ is  $(n-1)$-rectifiable. Moreover, $\Omega$ can be adjusted on a null
set so that $\overline{\partial^*\Omega}=\partial\Omega${; see \cite[Proposition 12.19]{M2012}. Throughout the sequel, whenever the topological
boundary, distance to the boundary, or boundary neighborhoods are used, we make this representative choice.}

A set $E$ of finite perimeter is \emph{indecomposable} if for any $F \subset E$ of finite perimeter satisfying
\[
\mathscr H^{n-1}(\partial^* E) = \mathscr H^{n-1}(\partial^* F) + \mathscr H^{n-1}(\partial^* (E\setminus F)),
\]
either $|F|=0$ or $|E\setminus F|=0$. For further details, see \cite{ACMM2001} and \cite[Sections 12 \& 15]{M2012}.

In \cite{FZ2025}, it was proven (via geometric measure theory techniques) that if a bounded, indecomposable set of finite
perimeter $\Omega$ satisfies, for some $A>0$, the measure-theoretic condition
\begin{equation}\label{FZ measure density}
\mathscr H^{n-1}(B_r(x)\cap \partial^* \Omega)\le A r^{n-1}
\quad \text{for $\mathscr H^{n-1}$-a.e.\ } x\in \partial^* \Omega \text{ and } r \in (0,1),
\end{equation}
along with
\begin{equation}\label{weak}
u \in W^{1,2}_0(\Omega)\quad\text{and}\quad
\int_{\Omega} \nabla u\cdot \nabla\varphi \, dx
= -\mathbf{c}\int_{\partial^* \Omega} \varphi \, d\mathscr{H}^{n-1} + \int_{\Omega} \varphi \, dx
\quad \forall\, \varphi\in C^1(\mathbb{R}^n),
\end{equation}
then $\Omega$ must be a ball. Notably, Lipschitz domains satisfy \eqref{FZ measure density}, thereby resolving
\cite[Question 7.1]{HLL2024}. 

More recently, \cite{DZ2025,DR2026} presented two alternative approaches for Lipschitz domains: One based on non-tangential limits and harmonic analysis techniques, and another relying on refined regularity theory for weak solutions of Alt--Caffarelli-type free boundary problems. We stress that all three proofs (those in \cite{FZ2025}, \cite{DZ2025}, and \cite{DR2026}) depend in an essential way on structural properties specific to the Laplacian.

The paper \cite{DZ2025} also proposes Conjecture~1.4, formulating an anisotropic Serrin-type conjecture for rough domains. However, the solvability of regularity problems for second-order divergence-form elliptic operators in general Lipschitz domains remains open, except in settings where the local Lipschitz constant is sufficiently small (e.g., in $C^1$ domains; see \cite{DPR2017}). Consequently, the strategy in \cite{DZ2025} to extend Serrin's theorem from the Laplacian to general anisotropic operators in Lipschitz domains faces intrinsic limitations and requires strong additional assumptions on both the geometry of the domain and the structure of the operator.

In the present paper, we instead pursue the geometric-measure-theoretic strategy of~\cite{FZ2025}.
A central difficulty is that several identities and rigidity steps used there rely on the special
structure of the Laplacian; developing replacements for these ingredients is the main
new contribution of this work.

\subsection{Main result}
To formulate our result, we first recall (a variant of) the $\beta$-number introduced by P.\ Jones \cite{P1990}: For each $x\in \partial^*\Omega$ and $r>0$, define
\begin{equation}\label{beta ball}
\beta(x,r):=\inf_{P} r^{1-n}\int_{\partial^*\Omega\cap B_r(x)} \frac{\dist(z,P)}{r}\, d\mathscr H^{n-1}(z),
\end{equation}
where $P$ ranges over all affine hyperplanes intersecting $B_r(x)$.

From now on, when considering a set of finite perimeter $\Omega$, we shall aways consider the representative for which $\partial\Omega=\overline{\partial^*\Omega}$.
Our main theorem states as follows.

\begin{thm}\label{main thm}
Let $\Omega\subset \mathbb R^n$ be a bounded indecomposable set of finite perimeter,  and assume there exist constant $A_1,A_2\geq 1$ such that 
\begin{equation}\label{beta number}
\int_{\partial^* \Omega}\int_{0}^{1} \beta(x,s)^2 \, \frac{ds} {s} \, d\mathscr H^{n-1}(x)\le A_1
\end{equation}
and that, for every $x\in \partial^* \Omega$ and every $r \in (0,1)$,
\begin{equation}\label{domain boundary1}
A_2^{-1}r^{n-1}\le \mathscr H^{n-1}(B_r(x)\cap \partial^* \Omega)\le A_2 r^{n-1}.
\end{equation}
Let $K$ be a bounded uniformly convex body whose boundary is of class
$C^{2,\gamma}$ for some $\gamma>0$ small, and let $H$ be the corresponding Wulff potential.
Then $\Omega$ admits a solution $u\in W^{1,2}(\mathbb R^n)$ to
\begin{equation}\label{eq:weak set finite per}
u=0\quad \text{a.e.\ in }\mathbb R^n\setminus \Omega,\qquad
\Delta_H u=\mathbf{c}H(-\nu)\,\mathscr{H}^{n-1}\llcorner \partial^*\Omega - \mathbf{1}_{\Omega}\,dx,
\end{equation}
in the sense of distributions, if and only if, {up to modifying $\Omega$ on a null set and up to a translation,} 
\[
{
\text{$\Omega$ is homothetic to $-K$}\qquad \text{and}\qquad
u(x)=\dfrac{(r^2-H_*^2(-x))_+}{2n}\quad \text{for some }r>0.
}
\]
\end{thm}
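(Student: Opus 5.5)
The "if" direction is a direct computation. For $\Omega=x_0+r(-K)$ and $u(x)=(r^2-H_*^2(x_0-x))_+/(2n)$ one uses the duality identities $H(DH_*(\xi))=1$ and $DH(DH_*(\xi))=\xi/H_*(\xi)$. From these, $DV(\nabla u)=-(x-x_0)/n$ inside $\Omega$, so $\Delta_H u=-1$ there. The boundary flux is $\mathbf c\,H(-\nu)$: on $\partial(-K)$ the identity $-(x-x_0)\cdot\nu/n=H(-\nu)\,r/n$ holds, and $\mathbf c=|\Omega|/P_H(\Omega)=r/n$ by the divergence theorem. So the real work is the "only if" direction, and I would follow the $P$-function / Weinberger strategy of \cite{FZ2025}, adapted to $\Delta_H$.

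\textbf{Step 1: interior regularity and a $P$-function.} Since $K$ is uniformly convex and $C^{2,\gamma}$, $V$ is uniformly elliptic with $C^{1,\gamma}$ gradient. Hence $u\in C^{1,\alpha}_{loc}(\Omega)$, and $u\in W^{2,2}_{loc}$ with $C^{2,\gamma}$ regularity wherever $\nabla u\neq0$. Set
\[P:=H(\nabla u)^2+\tfrac2n u.\]
In the region $\{\nabla u\ne0\}$ it satisfies a differential inequality $\mathcal L P\ge0$ for the linearized operator $\mathcal L=\mathrm{div}(D^2V(\nabla u)\nabla\cdot)$ up to lower-order terms, which is the anisotropic Cauchy--Schwarz computation of \cite{CS2009,WX2011}. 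I would verify it in weak form across the critical set using a standard regularization $V_\varepsilon$, so that $P$ is a weak subsolution in all of $\Omega$.

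\textbf{Step 2: boundary behaviour $H(\nabla u)\to\mathbf c$.} This is the step I expect to be the main obstacle. In \cite{FZ2025} one uses harmonicity-based identities, such as the Rellich--Pohozaev identity and properties of harmonic measure, to show $|\nabla u|\le\mathbf c$ near $\partial\Omega$ in a weak, averaged sense. Here I would combine three ingredients. First, Ahlfors regularity \eqref{domain boundary1} gives a Lipschitz bound $|\nabla u|\lesssim1$: by comparison with barriers built from $H_*$, $u(x)\lesssim\dist(x,\partial\Omega)$. Second, the flux condition says that the normal trace of $DV(\nabla u)$ equals $-\mathbf cH(-\nu)$ $\mathscr H^{n-1}$-a.e. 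Third, the square-function bound \eqref{beta number} yields good approximating planes at most scales, via a corona/blow-up argument. Blowing up at $\mathscr H^{n-1}$-a.e.\ $x\in\partial^*\Omega$ along such scales gives a one-dimensional limit $u_\infty(y)=\mathbf c\,(y\cdot(-\nu)/H(-\nu))_+$-type profile. This would imply $\limsup H(\nabla u)\le\mathbf c$ non-tangentially a.e., and the $\beta$-bound should let me upgrade this to a Carleson-type control of $(H(\nabla u)-\mathbf c)_+$ near the boundary.

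\textbf{Step 3: maximum principle and rigidity.} Combining Step 2 with $u=0$ on the boundary, $P\le\mathbf c^2$ near $\partial\Omega$ in an integrated sense. The subsolution property from Step 1 then gives $P\le\mathbf c^2$ in $\Omega$; here indecomposability is needed so that the argument runs on one connected piece. Next I would test the equation with $u$ and with $x\cdot DV(\nabla u)$, which gives a weak anisotropic Pohozaev identity justified by cutoffs $\dist(\cdot,\partial\Omega)$ and the bounds from Step 2. This yields
\[\int_\Omega (P-\mathbf c^2)\,dx=0.\]
Therefore $P\equiv\mathbf c^2$, so equality holds in the Cauchy--Schwarz step. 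That forces $D(DV(\nabla u))=-\tfrac1n I$, hence $DV(\nabla u)=-(x-x_0)/n$, and inverting gives $u=(r^2-H_*^2(x_0-x))/(2n)$ in $\Omega$. Since $u=0$ a.e.\ outside $\Omega$, $u>0$ in $\Omega$ by the strong maximum principle, and $\Omega$ is indecomposable, $\Omega$ coincides up to a null set with the sublevel set $x_0-rK$.
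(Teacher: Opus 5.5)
\textbf{There is a genuine gap.} It sits where you expected it, in the passage from boundary information to the interior bound $P\le\mathbf c^2$.

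Blow-ups at reduced boundary points give $\nabla u\to-\mathbf a(y)\nu_y$, hence $H(\nabla u)\to\mathbf c$, only nontangentially and only for $\mathscr H^{n-1}$-a.e.\ $y\in\partial^*\Omega$. A Carleson-type integral bound on $(H(\nabla u)-\mathbf c)_+$ near $\partial\Omega$ gives no pointwise information at all. From this you conclude, in Step 3, that the maximum principle for the subsolution $P$ yields $P\le\mathbf c^2$. That inference is not valid in a rough domain.

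The weak maximum principle needs $(P-\mathbf c^2)_+\in W^{1,2}_0(\Omega)$, or $\limsup P\le\mathbf c^2$ at \emph{every} boundary point. With only a.e.\ boundary control, you would need to know that the elliptic measure of $L_{\mathcal A}={\rm div}(D^2V(\nabla u)\nabla\cdot)$ does not charge $\mathscr H^{n-1}$-null sets. This is false for general uniformly elliptic operators: elliptic measure can be singular with respect to surface measure even on the ball with continuous coefficients. The $\beta$-numbers carry no information about the coefficients $D^2V(\nabla u)$ near $\partial\Omega$, so Step 2 cannot supply this.

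\textbf{The missing idea} is the one in Section~\ref{sec:proof}: make the linearized operator work for you through $u$ itself.
\begin{itemize}
\item By $2$-homogeneity, $D^2V(\nabla u)\nabla u=DV(\nabla u)$. So $u$ solves the \emph{linear} equation $L_{\mathcal A}u=-1$, and each $\partial_e u$ solves $L_{\mathcal A}\partial_eu=0$.
\item Comparing the Green function of $L_{\mathcal A}$ with $Mu$ away from the pole gives $G_x\le Mu$. Hence $\omega_x\le M\mathbf c\,H(-\nu)\,\mathscr H^{n-1}\llcorner\partial^*\Omega$ (Lemma~\ref{greens function}). So the elliptic measure is absolutely continuous with bounded density, and a.e.\ traces suffice.
\item One then represents the \emph{linear} solutions $\partial_eu$, $e\in K$, by $\omega_x$, not $P$. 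The a.e.\ trace satisfies $(\partial_eu)^*=-\mathbf a\,\nu\cdot e\le\mathbf aH(-\nu)=\mathbf c$. This gives $H(\nabla u)=\sup_{e\in K}\partial_eu\le\mathbf c$ at every interior point (Proposition~\ref{max nabla u}).
\item Only then is the maximum principle for $P$ used, on $\{u\ge\eta\}\subset\subset\Omega$, where the boundary values are controlled pointwise. Then let $\eta\downarrow0$.
\end{itemize}

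\textbf{Two smaller points also need repair.}

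First, the Lipschitz bound cannot come from $H_*$-barriers. Such barriers need exterior Wulff-shape conditions at boundary points. Ahlfors regularity alone is not enough: a torsion function at a reentrant corner is not Lipschitz. The bound comes from the overdetermined condition, via $|\Delta_Hu|(B_r)\lesssim r^{n-1}$, together with potential estimates and a blow-up argument.

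Second, your cutoff Pohozaev argument needs more than $\limsup H(\nabla u)\le\mathbf c$. To identify the boundary-layer limits of $(x\cdot\nabla u)\,DV(\nabla u)\cdot\nabla d$ and $V(\nabla u)\,x\cdot\nabla d$, you need $\nabla u\to-\mathbf a\nu$ and $\nabla d\to-\nu$ on most of the layer. You also need an ADR covering argument to control the non-flat balls. This is the work done in the paper by Lemma~\ref{second derivative} and the appendix, in the dilation formulation.
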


\begin{rem}
Theorem~\ref{main thm} implies, in particular, an anisotropic version of Serrin's theorem for any domain $\Omega$ whose boundary
satisfies \eqref{beta number} and \eqref{domain boundary1} above. Note that Lipschitz domains, or even uniformly $n$-rectifiable sets, do satisfy these assumptions, see e.g.\ \cite{DS1991} and
\cite[Proposition~III.4.2]{DS1993}. See also \cite{T2015, AT2015} for the application of $\beta$-type numbers in characterizing the rectifiability of sets. 

We note that the assumptions on $K$ imply that $H$ is of class $C^{2,\gamma}$ and uniformly convex away from the origin.
\end{rem}

\begin{rem}
We say that a set $\Omega\subset\mathbb R^n$ has finite $(n-1)$-dimensional upper Minkowski content if there exists $M>0$ such that
\begin{equation}\label{eq:minkowski-content}
    {\limsup_{r\to 0^+} \frac{|\mathcal N_{r}(\partial\Omega)|}{r}\le M<\infty,}
\end{equation}
where, for $r>0$, the $r$-neighborhood of $\partial\Omega$ is
\[
\mathcal N_r(\partial\Omega):=\left\{ x\in \mathbb R^n:\ \dist(x,\partial \Omega)\le r\,\diam(\Omega) \right\}.
\]
Note that, via a Besicovitch covering, one can directly conclude from \eqref{domain boundary1} that $\partial \Omega$ has a finite
Minkowski content with $M=M(n,A_2)$.
\end{rem}

\begin{rem}
When there are finitely many cusps along $\partial \Omega$, our method can still be applied, even though the lower bound in the
Ahlfors regularity assumption \eqref{domain boundary1} does not necessarily hold. Indeed, this lower bound is only used to
control the number of ``bad'' balls (see the last conclusion in Proposition~\ref{prop:ADR} and its application in the proof of
Lemma~\ref{second derivative}). Thus, when there are finitely many cusps, one can directly cut off a suitable neighborhood of the cusps in the argument.
\end{rem}

\medskip
The manuscript is organized as follows.
In Section~2, we establish the volume identity \eqref{volume} by following Weinberger's original approach. The primary challenge
arises from the nonlinearity of the operator, which restricts the regularity of the solutions: the original proof relies on global
higher-order differentiability properties that are not guaranteed in this setting. To be specific, if $u\in W^{2,2}(\Omega)$
were known, the proof could be adapted from \cite{FZ2025} with minor technical adjustments. However, while local $W^{2,2}$
regularity is available in our setup, its global counterpart remains unclear, even in Lipschitz domains.

To address this issue, we exploit the geometric properties encoded by the $\beta$-number \eqref{beta number}
(see Proposition~\ref{prop:ADR} in the appendix). In addition, we prove a vanishing property for the Hessian of $u$
(see Item~(5) in Lemma~\ref{basic property u}). These results enable us to prove the validity of  the crucial estimate \eqref{change derivative},
which would follow directly from the chain rule under the assumption $u\in W^{2,2}(\Omega)$.

In Section~3, we analyze   Green's function for the linearized operator and conclude Theorem~\ref{main thm} via classical
$P$-function theory.

\medskip
{\noindent \it Acknowledgments.}
The second author would like to express his gratitude to the Forschungsinstitut f\"ur Mathematik (FIM) at ETH Z\"urich for their warm hospitality and support during his visit, where this work was completed.

\section{Preliminary results}
\label{sec:prelim}

Throughout the manuscript, we denote by $\mathscr H^{n-1}$ the $(n-1)$-dimensional Hausdorff measure, and by $\#$
the counting measure. We also write $\omega_{n-1}$ for the $(n-1)$-dimensional measure of the unit ball in $\mathbb R^{n-1}$. {We employ the notation $a \sim b$ to succinctly denote that}
$$
{C^{-1} a \leq b \leq C a}
$$
{for some absolute constant $C \geq 1$.}

In this section, we establish regularity properties of the weak solution $u$ and of the set $\Omega$, and prove
a volume identity that will be used in the proof of Theorem~\ref{main thm}.

In the next lemma, $\mathring\Omega$ denotes the (topological) interior of $\Omega$. The lemma is a variant of
\cite[Lemma~2.1]{FZ2025}. Here, we use nonlinear potential estimates (see \cite{KM12,KM1994}) to obtain Lipschitz
regularity up to the boundary. We will also need a vanishing property for $D^2u$ at small scales.

\begin{lem}\label{basic property u}
Let $\Omega\subset \mathbb R^n$ be a bounded set of finite perimeter satisfying \eqref{domain boundary1},
and let $u \in W^{1,2}(\mathbb R^n)$ satisfy \eqref{eq:weak set finite per}, where $H$ satisfies the assumptions in Theorem~\ref{main thm}.
Then:
\begin{enumerate}
\item[(1)] $u$ is nonnegative and $\mathring\Omega=\{u>0\}$.

\item[(2)] $u$ is {Lipschitz continuous} (globally in $\mathbb R^n$).

\item[(3)] $\Omega=\mathring\Omega$ up to a set of measure zero. In particular, without loss of generality, we may assume
$\Omega$ to be open.

\item[(4)] For every $x\in \partial^*\Omega$ it holds
\[
\frac{u(x+rz)}{r} \to  \mathbf{a}(x)\bigl(-\nu_x\cdot z \bigr)_+ \quad \text{ as } \ r\to 0,
\]
where $\nu_x$ denotes the measure-theoretic outer unit normal at $x$ and $\mathbf {a}(x):=\frac{\mathbf c}{H(-\nu_x)}$.
{Moreover, the interior gradients have the approximate trace
\[
\nabla u(y)\to -\mathbf a(x)\nu_x
\quad\text{as } y\to x \text{ from inside }\Omega
\]
in the measure-theoretic sense, i.e. after rescaling at $x$ the gradients converge in measure on interior half-balls.}

\item[(5)] For any $0<\kappa<1$, any $\delta>0$, any $\eta>0$, and every $y\in \partial^*\Omega$, there exists
$r_0=r_0(y,n,\kappa,\delta,\eta)>0$ such that for every $0<r<r_0$ one has
\[
\beta(y,r)\le \eta,
\qquad
\frac{\mathscr {H}^{n-1}(\partial^*\Omega\cap B(y,r))}{\omega_{n-1}r^{n-1}}\in (1-\eta,\,1+\eta),
\]
and
\[
\int_{\hat B_{y,\,\kappa,\, r}}|D^2u|\, dx\le \delta \,(\kappa^{-1} r)^{n-1},
\]
where
\[
\hat B_{y,\,\kappa,\, r}:=
B_{\frac{\kappa^{-1}-\kappa}{2}r}\!\left(y-\frac{\kappa^{-1}+\kappa}{2}r\,\nu_y\right){\subset\subset \Omega}.
\]
\end{enumerate}
\end{lem}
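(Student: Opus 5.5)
We prove the five items in order, following the scheme of \cite[Lemma~2.1]{FZ2025} and replacing the linear tools with their counterparts for the quasilinear operator $\Delta_H$. Since $H$ is $C^{2,\gamma}$ and uniformly convex, $V=\frac12H^2$ is $C^{1,1}$, and $D^2V$ is uniformly elliptic and bounded away from the origin. Hence $DV$ is strongly monotone: $(DV(\xi)-DV(\zeta))\cdot(\xi-\zeta)\ge\lambda|\xi-\zeta|^2$.

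\emph{Items (1)--(3).} For (1), note that $\Delta_H u\ge -\mathbf 1_\Omega$ and that the boundary measure is nonnegative. Testing with $u_-$, which vanishes outside $\Omega$ so the boundary term drops, gives $\int_{\{u<0\}}DV(\nabla u)\cdot\nabla u\le -\int_\Omega u_-\le0$, hence $u_-=0$. Inside $\mathring\Omega$ the equation reads $\Delta_Hu=-1$ with $u\ge0$. The weak Harnack inequality (strong maximum principle) for this quasilinear operator, applied on each component, gives $u>0$ on $\mathring\Omega$. Conversely, $u$ is continuous by (2) and vanishes a.e.\ outside $\Omega$, so $\{u>0\}$ is an open set contained in $\Omega$ up to a null set. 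The density lower bound \eqref{domain boundary1} gives $\partial\Omega=\overline{\partial^*\Omega}$ with positive density of both $\Omega$ and its complement at every boundary point, and this rules out $\{u>0\}$ meeting $\partial\Omega$.

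For (2), the measure $\mu:=\mathbf c H(-\nu)\mathscr H^{n-1}\llcorner\partial^*\Omega-\mathbf 1_\Omega$ satisfies $|\mu|(B_r(x))\le Cr^{n-1}$ by the upper bound in \eqref{domain boundary1}. The Kuusi--Mingione pointwise gradient potential estimates \cite{KM12,KM1994} for $\operatorname{div}a(\nabla u)=\mu$ (with $a=DV$, which has the required $C^1$ structure away from $0$ and is uniformly elliptic) give
\[
|\nabla u(x)|\lesssim \bint_{B_R(x)}|\nabla u|+\int_0^{R}\frac{|\mu|(B_t(x))}{t^{n-1}}\frac{dt}{t}.
\]
Used as stated, this integral diverges logarithmically. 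We would instead use the sharper Riesz-potential/Wolff form at the borderline, combined with the sign structure: $\Delta_Hu\ge-1$ on $\mathbb R^n$, and in $\Omega$ we have $\Delta_Hu=-1$. A barrier argument then gives $u(x)\lesssim \dist(x,\partial\Omega)$. This comes from comparison with the anisotropic paraboloid $(r^2-H_*^2)/2n$ after scaling, together with a weak Harnack bound of the boundary-measure type ($\int_{B_r}u\lesssim r^{n+1}$, deduced from $|\mu|(B_r)\lesssim r^{n-1}$). Interior $C^{1,\alpha}$ gradient estimates for $\Delta_Hu=-1$ on $B_{d/2}(x)$ then yield $|\nabla u(x)|\lesssim u$-oscillation$/d\lesssim1$. \textbf{We regard this step as the main obstacle}: we will first try the linear-growth bound via the potential estimate $\sup_{B_r}u\lesssim \bint_{B_{2r}}u+W^\mu$, whose Wolff potential $\int_0^r(|\mu|(B_t)/t^{n-2})\,dt/t\lesssim r$ is finite. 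Item (3) then follows as in \cite{FZ2025}: $|\Omega\setminus\{u>0\}|>0$ together with $\Delta_Hu=0$ there (since $\nabla u=0$ a.e.\ on $\{u=0\}$) would contradict $-\mathbf 1_\Omega$ being the absolutely continuous part of $\mu$.

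\emph{Item (4).} Blow up at $x\in\partial^*\Omega$ with $u_r(z)=u(x+rz)/r$. These functions are uniformly Lipschitz by (2), and $\Omega_r\to$ the half-space $\{-\nu_x\cdot z>0\}$ in $L^1_{loc}$ by De Giorgi. The equations pass to the limit thanks to local strong $W^{1,2}$ convergence, which follows from monotonicity of $DV$ and $\mu_r\to \mathbf cH(-\nu_x)\mathscr H^{n-1}\llcorner\{\nu_x\cdot z=0\}$. The limit $u_0\ge0$ solves $\Delta_Hu_0=0$ in the half-space, vanishes outside it, and has linear growth. By Liouville for $\Delta_H$ in the half-space (boundary Harnack / $C^{1,\alpha}$ up to flat boundary), $u_0=a(-\nu_x\cdot z)_+$. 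The jump condition $DV(\nabla u_0)\cdot(-\nu_x)\cdot(-1)=\mathbf cH(-\nu_x)$ reads $a\,H(-\nu_x)DH(-\nu_x)\cdot\nu_x\cdot(-1)=a H(-\nu_x)^2$, which gives $a=\mathbf c/H(-\nu_x)$. Uniqueness of the limit gives convergence of the whole family, and strong gradient convergence gives the approximate trace.

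\emph{Item (5).} The $\beta$ and density statements hold for small $r$ at every point of $\partial^*\Omega$, by flatness of the blow-up and the convergence $\mathscr H^{n-1}\llcorner\partial^*\Omega_r\to\mathscr H^{n-1}\llcorner$hyperplane, which is guaranteed by De Giorgi's theorem. For the Hessian, rescale: $\hat B_{y,\kappa,r}$ becomes a fixed ball $\hat B$ with $\operatorname{dist}(\hat B,\{\nu_y\cdot z=0\})=\kappa r/r=\kappa$, compactly inside the limiting half-space. Then $\int_{\hat B_{y,\kappa,r}}|D^2u|=r^{n-1}\int_{\hat B}|D^2u_r|$. In $\hat B$ (enlarged slightly) we have $\Delta_Hu_r=-r$, and by (4) $\nabla u_r$ is uniformly close to $-\mathbf a\nu_y\ne0$, so the equation is uniformly elliptic with $C^{0,\gamma}$ coefficients there. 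Interior $C^{1,\alpha}$ and then $W^{2,2}$/Schauder estimates for the differentiated equation, applied to $u_r-\ell$ with $\ell$ the limiting affine function, give $\|D^2u_r\|_{L^1(\hat B)}\to0$. This yields the bound $\delta(\kappa^{-1}r)^{n-1}$ for $r<r_0$.
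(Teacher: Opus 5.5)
\paragraph{Verdict.} Items (1), (3), (4) and (5) follow essentially the same lines as the paper. Item (2), however, has a genuine gap, and it is the analytic core of the lemma: the boundary linear growth $u\le C\dist(\cdot,\partial\Omega)$.

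\paragraph{Why your route for (2) does not close.} You rightly note that the gradient potential bound diverges logarithmically. Your replacement, though, reduces the problem to itself. Since $\Delta_Hu\ge-1$ on $\mathbb R^n$, $u$ is already a subsolution, so $\sup_{B_r(x)}u\lesssim\bint_{B_{2r}(x)}u+r^2$ holds with no Wolff term at all. Everything therefore hinges on $\bint_{B_{2r}(x)}u\lesssim r$ for $x\in\partial\Omega$, which is the claim itself. None of the tools you list supplies this:
\begin{enumerate}
\item[(a)] Your ``weak Harnack bound of boundary-measure type'' is, for the Laplacian, a consequence of the mean value formula $\frac{d}{dr}\bint_{\partial B_r(x)}u=c_n\,r^{1-n}\,\Delta u(B_r(x))$. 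This has no analogue for $\Delta_H$.
\item[(b)] The Kilpel\"ainen--Mal\'y bound $u(y)\lesssim\inf_{B_R(y)}u+W^{\mu}(y,2R)$ would give linear growth by taking the infimum to be $0$. But it holds only for nonnegative supersolutions, whereas $-\Delta_Hu=\mathbf 1_\Omega-\mathbf cH(-\nu)\,\mathscr H^{n-1}\llcorner\partial^*\Omega$ has a negative singular part.
\item[(c)] Paraboloid barriers $(r^2-H_*^2)/2n$ placed inside $\Omega$ give lower bounds, not upper bounds.
\end{enumerate}
Comparison with the $\Delta_H$-harmonic replacement in $B_{2r}(x)$ would work if $\mathbb R^n\setminus\Omega$ had uniformly positive volume density at boundary points. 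However, \eqref{domain boundary1} does not provide this; an inward cusp is a counterexample.

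\paragraph{The missing idea.} The paper uses a blow-up/compactness argument.
\begin{enumerate}
\item[(i)] If linear growth fails, choose $x_k\in\partial\Omega$ and $r_k\downarrow0$ with $u(x_k+z)\le k|z|$ for $r_k\le|z|\le1$ and $\max_{\overline B_{r_k}}u(x_k+\cdot)=kr_k$. Set $v_k(y)=u(x_k+r_ky)/(kr_k)$.
\item[(ii)] Then $0\le v_k\le\max\{1,|y|\}$, $v_k(0)=0$, $\max_{\overline B_1}v_k=1$, and $\Delta_Hv_k=\nu_k$ with $|\nu_k|(B_\rho(z))\le C\rho^{n-1}/k$. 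The $(n-1)$-growth of $|\mu|$ is invariant under linear rescaling, and the extra factor $1/k$ kills the measure in the limit.
\item[(iii)] Uniform H\"older estimates give a nonnegative entire $\Delta_H$-harmonic limit with $v_\infty(0)=0$ and $\max_{\overline B_1}v_\infty=1$. This contradicts the strong maximum principle.
\end{enumerate}
Global Lipschitz continuity then follows from interior gradient estimates on $B_{\dist(y,\partial\Omega)}(y)$, as you indicate.

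\paragraph{Smaller points.}
\begin{enumerate}
\item[(1)] You take continuity ``from (2)''. Continuity is also what makes the boundary term vanish when you test with $u_-$. Meanwhile your (2) uses $u\ge0$, so the argument is circular. Break the loop with the Kuusi--Mingione H\"older estimate, which follows directly from $|\mu|(B_r)\lesssim r^{n-1}$.
\item[(2)] The reason $\{u>0\}$ avoids $\partial\Omega$ is simply that $u$ vanishes at points of $\partial^*\Omega$, where the complement has density $1/2$. Hence it vanishes on $\overline{\partial^*\Omega}=\partial\Omega$. Two-sided uniform density is neither needed nor implied.
\item[(3)] For item (3), testing with $\varphi\,(1-u/\epsilon)_+$ and using $\nabla u=0$ a.e.\ on $\{u=0\}$ yields $\Delta_Hu\llcorner\{u=0\}\ge0$, not $=0$; indeed this measure carries the boundary measure. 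Nonnegativity is exactly what contradicts the absolutely continuous part $-\mathbf 1_\Omega$.
\item[(4)] In item (5), your route via $C^{1,\alpha}$ convergence of $\nabla u_r$ to $-\mathbf a(y)\nu_y$ plus Schauder estimates is a valid and more explicit version of the paper's ``uniform $W^{2,2}$ bound plus compactness''.
\end{enumerate}
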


\begin{proof}
Since $\Omega$ satisfies \eqref{domain boundary1}, the measure
\[
\mu:=\mathbf{c}H(-\nu)\,\mathscr{H}^{n-1}\llcorner \partial^*\Omega - \mathbf{1}_{\Omega}\,dx
\]
satisfies
\begin{equation}\label{eq:mu n1}
|\mu|(B_r(z))\leq C(n,A_2,H,\mathbf{c})\,r^{n-1}\qquad \forall\,z \in \mathbb R^n,\ \forall\,r \in (0,1).
\end{equation}
Indeed, the boundary part is controlled by \eqref{domain boundary1} and $H$ is bounded on $\mathbb S^{n-1}$, while the volume
part satisfies $|B_r|\lesssim r^n\le r^{n-1}$ for $r<1$.

Hence $u$ solves $\Delta_H u=\mu$ in $\mathbb R^n$, and by \cite[Equation (1.29)]{KM12} (see also \cite{KM1994}) it follows that
$u$ is H\"older continuous.

\medskip
\noindent\textit{Step 1: Nonnegativity and identification of $\{u>0\}$.}
Since $u=0$ a.e.\ in $\mathbb R^n\setminus\Omega$ and $u$ is continuous, we have $u\equiv 0$ on $\mathbb R^n\setminus\mathring\Omega$.
Moreover, on $\mathring\Omega$ the equation reduces to $\Delta_Hu=-1<0$ in the weak sense. Therefore, by the strong maximum
principle, either $u\equiv 0$ on a connected component of $\mathring\Omega$ or $u>0$ there. Since $u\not\equiv 0$ in $\Omega$
(because $\Delta_Hu=-1$ in $\mathring\Omega$), we conclude $u\ge 0$ and
\[
\mathring\Omega=\{u>0\}.
\]
This proves (1).

\medskip
\noindent\textit{Step 2: Boundary linear growth.}
We claim that there exists $C>0$ such that
\begin{equation}\label{eq:Lip bdry}
u(y)\le C\,\dist(y,\partial\Omega)\qquad \text{for all }y\in \Omega.
\end{equation}
Assume by contradiction that \eqref{eq:Lip bdry} fails. {Then, for $k\gg 1$, we can find
$x_k\in \partial\Omega$ and $0<r_k\downarrow0$ such that
\[
u(x_k+z)\le k|z|\quad\text{for }z\in B_1\setminus B_{r_k},
\qquad
\max_{z\in \overline B_{r_k}}u(x_k+z)=kr_k.
\]
Equivalently, after the rescaling below,
\[
\sup_{z\in B_{1/r_k}}\frac{u(x_k+r_k z)}{r_k\max\{1,|z|\}}=k.
\]}
Define
\[
v_k(x):=\frac{u(x_k +r_k x)}{kr_k}.
\]
Then $v_k\ge 0$, $v_k(0)=0$ since $u(x_k)=0$, and $\max_{\overline B_1}v_k = 1$. Moreover,
\[
v_k(y)\le \max\{1,|y|\}\qquad \forall\,y \in B_{1/r_k}.
\]
A scaling computation, together with \eqref{eq:mu n1}, yields that $v_k$ solves
\[
\Delta_H v_k=\nu_k \quad\text{in }\mathbb R^n,
\]
where the measures $\nu_k$ satisfy
\[
|\nu_k|(B_r(z))\leq \frac{C(n,A_2,H,\mathbf{c})}{k}\,r^{n-1}\qquad \forall\,z\in\mathbb R^n,\ \forall\,r\in(0,1).
\]
Letting $k \to \infty$, by \cite{BM92} and the local H\"older compactness (again from \cite{KM12}, thanks to the bounds on $\nu_k$),
we find (up to a subsequence) a limit function $v_\infty:\mathbb R^n\to \mathbb R$ such that
\[
\Delta_H v_\infty =0,\qquad v_\infty \ge 0,\qquad v_\infty(0)=0,\qquad \max_{\overline B_1}v_\infty = 1.
\]
This is impossible since, by the strong maximum principle, the first three properties above imply that $v_\infty$ must be identically zero. Hence \eqref{eq:Lip bdry} holds.

\medskip
\noindent\textit{Step 3: Global Lipschitz continuity.}
Since $\Delta_H u=-1$ inside $\mathring\Omega$ and $H\in C^{1,1}(\mathbb R^n\setminus\{0\})$, the boundary growth estimate
\eqref{eq:Lip bdry} combined with interior gradient estimates implies that $u$ is uniformly Lipschitz in $\mathring\Omega$.
Recalling that $u$ vanishes in $\mathbb R^n\setminus \Omega$, we conclude that $u$ is globally Lipschitz in $\mathbb R^n$.
This proves (2).

\medskip
\noindent\textit{Step 4: $\Omega=\mathring\Omega$ up to a null set.}
Since $u \ge 0$, the measure $\Delta_H u$ is nonnegative in the set $\{u=0\}=\mathbb R^n\setminus \mathring\Omega$, while
inside $\mathring\Omega=\{u>0\}$ we have $\Delta_H u=-1$. Thus
\[
\Delta_H u= \mu_+- \mathbf{1}_{\mathring\Omega}\,dx
\]
for some nonnegative measure $\mu_+$. Comparing this decomposition with \eqref{eq:weak set finite per} yields
\[
\mu_+=\mathbf{c}H(-\nu)\,\mathscr{H}^{n-1}\llcorner \partial^*\Omega
\qquad\text{and}\qquad
\mathbf{1}_{\Omega}\,dx=\mathbf{1}_{\mathring\Omega}\,dx.
\]
This implies that $\Omega$ and $\mathring\Omega$ coincide a.e., thus proving (3).

\medskip
\noindent\textit{Step 5: Boundary blow-up.}
Fix $x\in \partial^*\Omega$ and consider the blow-ups
\[
v_r(z)=\frac{u(x+rz)}{r},\qquad \Omega_{x,r}:=\frac{\Omega-x}{r}.
\]
By the $1$-homogeneity of $H$, a scaling computation gives
\[
\Delta_H v_r=\mathbf{c}H(-\nu_{\Omega_{x,r}})\,\mathscr{H}^{n-1}\llcorner \partial^* \Omega_{x,r}
- r\,\mathbf{1}_{\Omega_{x,r}}\,dz.
\]
Since $x\in\partial^*\Omega$, we have $\Omega_{x,r}\to \mathcal H_x$ in $L^1_{\rm loc}$, where $\mathcal H_x$ is the half-space
with outer unit normal $\nu_x$. Moreover, by the Lipschitz bound, $\{v_r\}$ is locally equi-Lipschitz, hence along any sequence
$r_i\downarrow 0$ we may extract a subsequence such that $v_{r_i}\to v_0$ locally uniformly. The limit satisfies
\[
\Delta_H v_0=0\quad \text{in }\mathcal H_x,\qquad v_0=0\quad \text{in }\mathbb R^n\setminus \mathcal H_x,\qquad v_0\ge 0.
\]
By the anisotropic Liouville classification of such global half-space solutions, \footnote{{One may prove this directly in the half-space: tangential difference quotients solve the linearized uniformly elliptic equation with zero boundary trace, and the boundary Harnack/Liouville argument forces them to vanish. Thus the solution depends only on the normal variable, where the one-dimensional computation gives the stated affine profile.}} there exists $\mathbf a(x)\ge 0$ such that
\[
v_0(z)=\mathbf a(x)\,(-\nu_x\cdot z)_+.
\]
On the other hand, again using $x\in\partial^*\Omega$, one has
\[
H(-\nu_{\Omega_{x,r}})\,\mathscr{H}^{n-1}\llcorner \partial^*\Omega_{x,r}
\rightharpoonup
H(-\nu_{\mathcal H_x})\,\mathscr{H}^{n-1}\llcorner \partial\mathcal H_x,
\]
so passing to the limit in the equation gives
\[
\Delta_H v_0=\mathbf{c}H(-\nu_{\mathcal H_x})\,\mathscr{H}^{n-1}\llcorner \partial\mathcal H_x.
\]
Comparing with $v_0=\mathbf a(x)(-\nu_x\cdot z)_+$ yields
\[
\mathbf a(x)\,H(-\nu_x)=\mathbf c.
\]
{Finally, the same compactness gives the stated trace of the gradient. Indeed, on every compact subset of
$\mathcal H_x$ the functions $v_r$ solve uniformly elliptic equations with right-hand side tending to zero and converge
uniformly to the affine profile; interior Caccioppoli estimates therefore give
$\nabla v_r\to -\mathbf a(x)\nu_x$ in $L^1_{\rm loc}(\mathcal H_x)$. The part of
$B_1\cap\Omega_{x,r}$ within distance $\tau$ of the limiting hyperplane has measure $O(\tau)$, uniformly for $r$ small,
and the gradients are uniformly bounded. Letting first $r\downarrow0$ and then $\tau\downarrow0$ proves the
measure-theoretic interior trace of $\nabla u$.}
This proves (4).

\medskip
\noindent\textit{Step 6: Proof of (5).}
Fix $y\in\partial^*\Omega$, $\kappa\in(0,1)$, $\delta>0$, and $\eta>0$.

\smallskip
\noindent\emph{(i) $\beta$-flatness and density.}
Since $y\in\partial^*\Omega$, by De Giorgi's structure theorem \cite[Theorem~15.9]{M2012} we have
\[
\lim_{r\downarrow 0}\beta(y,r)=0,
\qquad
\lim_{r\downarrow 0}\frac{\mathscr H^{n-1}(\partial^*\Omega\cap B(y,r))}{\omega_{n-1}r^{n-1}}=1.
\]
Hence there exists $r_1=r_1(y,\eta)>0$ such that for all $0<r<r_1$ the first two inequalities in (5) hold.

\smallskip
\noindent\emph{(ii) Hessian smallness on the interior ball.}
Consider the blow-ups
\[
u_{y,r}(z):=\frac{u(y+r z)}{r}.
\]
By Step~5 (applied at $y$), we have $u_{y,r}\to \mathbf a(y)(-\nu_y\cdot z)_+$ locally uniformly as $r\downarrow 0$.
{The ball}
\[
\hat B_{0,\kappa,1}:=
B_{\frac{\kappa^{-1}-\kappa}{2}}\!\left(-\frac{\kappa^{-1}+\kappa}{2}\,\nu_y\right)
\]
{is compactly contained in $\mathcal H_y$, and the affine limit
$\mathbf a(y)(-\nu_y\cdot z)_+$ is bounded from below by a positive constant on this ball. Since
$u_{y,r}\to \mathbf a(y)(-\nu_y\cdot z)_+$ locally uniformly and
$\Omega_{y,r}=\{u_{y,r}>0\}$ up to null sets, for $r$ small enough the same ball is compactly contained in
$\Omega_{y,r}$. Scaling back, this is exactly $\hat B_{y,\kappa,r}\subset\subset\Omega$.}

On $\hat B_{0,\kappa,1}$ the functions $u_{y,r}$ solve
\[
{\rm div}\bigl(DV(\nabla u_{y,r})\bigr)=-r
\]
and are uniformly Lipschitz (since $\nabla u_{y,r}(z)=\nabla u(y+r z)$ and $u$ is Lipschitz). Hence interior $W^{2,2}$
estimates for uniformly elliptic quasilinear equations yield a uniform bound
\[
\|D^2u_{y,r}\|_{L^2(\hat B_{0,\kappa,1})}\le C(n,\kappa,L),
\]
independent of $r$ (for $r$ sufficiently small). Since the limit profile $\mathbf a(y)(-\nu_y\cdot z)_+$ is affine on
$\hat B_{0,\kappa,1}$, elliptic compactness implies
\[
D^2u_{y,r}\to 0 \quad\text{strongly in }L^1(\hat B_{0,\kappa,1})\quad\text{as }r\downarrow 0.
\]
In particular, there exists $r_2=r_2(y,n,\kappa,\delta)>0$ such that for all $0<r<r_2$,
\[
\int_{\hat B_{0,\kappa,1}} |D^2u_{y,r}(z)|\,dz \le \delta\,\kappa^{-(n-1)}.
\]
Scaling back (recall $D^2u(y+r z)=\frac1r D^2u_{y,r}(z)$ and $dx=r^n dz$) gives
\[
\int_{\hat B_{y,\kappa,r}} |D^2u(x)|\,dx
= r^{n-1}\int_{\hat B_{0,\kappa,1}} |D^2u_{y,r}(z)|\,dz
\le \delta\, r^{n-1}\kappa^{-(n-1)} = \delta\,(\kappa^{-1}r)^{n-1}.
\]

\smallskip
Finally, taking $r_0:=\min\{r_1,r_2\}$ yields (5).
\end{proof}

We now establish a key step toward proving the volume identity. The validity of \eqref{change derivative} would follow directly from the chain rule
if one knew $u\in W^{2,2}(\Omega)$ globally;\footnote{Indeed, noticing that $DV(\xi)=D^2V(\xi)\,\xi$, we see that
\begin{multline*}
DV(\nabla u(x))\cdot\lim_{\varepsilon\to 0} \frac{\alpha_{\varepsilon}(x)-\alpha_{-\varepsilon}(x)}{2\varepsilon} = DV(\nabla u(x))\cdot D^2u(x)\cdot x \\
=\nabla u(x) \cdot D^2V(\nabla u(x))\cdot D^2u(x)\cdot x=\nabla u(x) \cdot \lim_{\varepsilon\to 0}\frac{DV(\alpha_{\varepsilon}(x))-DV(\alpha_{-\varepsilon}(x))}{2\varepsilon}\quad \text{a.e.}
\end{multline*}
Thus, if $u\in W^{2,2}(\Omega)$,
\eqref{change derivative} follows from dominated convergence.
} however, since we only have $W^{2,2}_{\rm loc}(\Omega)$ regularity, we need to use a fine localization argument
based on the geometry of $\partial\Omega$. It is in this step that the assumption \eqref{beta number} plays a crucial role.

\begin{lem}\label{second derivative}
Let $\Omega$ be a bounded set of finite perimeter satisfying \eqref{beta number} and \eqref{domain boundary1}, and let
$u$ be a weak solution of \eqref{eq:weak set finite per}. Then
\begin{multline}\label{change derivative}
\liminf_{\varepsilon\to 0}\bigg|
\int_{\Omega} DV(\nabla u(x))\cdot \frac{\alpha_{\varepsilon}(x)-\alpha_{-\varepsilon}(x)}{2\varepsilon}\,dx \\
-
\int_{\Omega} \nabla u(x)\cdot \frac{DV(\alpha_{\varepsilon}(x))-DV(\alpha_{-\varepsilon}(x))}{2\varepsilon}\,dx
\bigg|=0,
\end{multline}
where $\alpha_{\pm\varepsilon}(x):=\nabla u((1\pm\varepsilon)x)$.
\end{lem}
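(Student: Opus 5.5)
Our plan is to split $\Omega$ into an interior region, where $u\in W^{2,2}_{\rm loc}$ and the footnote computation applies verbatim, and a thin boundary layer, whose contribution we show to be small. We will use the $\beta$-number bound \eqref{beta number}, the Ahlfors regularity \eqref{domain boundary1}, and the vanishing Hessian property of Lemma~\ref{basic property u}(5).

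Denote the integrand of \eqref{change derivative} by $I_\varepsilon(x)$. We first note two facts. Since $u$ is globally Lipschitz (Lemma~\ref{basic property u}(2)) and $DV$ is Lipschitz on bounded sets ($V$ is $1$-homogeneous of degree $2$ and $C^{1,1}$), we have the pointwise bound
\[
|I_\varepsilon(x)|\lesssim \varepsilon^{-1}|\nabla u((1+\varepsilon)x)-\nabla u((1-\varepsilon)x)|.
\]
Moreover, on any region where the segment $\{tx: |t-1|\le\varepsilon\}$ stays inside a set where $u\in W^{2,1}$, this is bounded by $\fint_{-\varepsilon}^{\varepsilon}|D^2u((1+t)x)|\,|x|\,dt$.

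Fix $\delta,\kappa,\eta$. For a small $\rho>0$ we consider the layer $\mathcal N_{\rho}:=\{x\in\Omega:\dist(x,\partial\Omega)<\rho\}$. On $\Omega\setminus\mathcal N_\rho$ we have $u\in W^{2,2}$, so for $\varepsilon\ll\rho$ the chain-rule identity of the footnote together with dominated convergence (difference quotients converge in $L^1$ to $D^2u\cdot x$) make this part of $I_\varepsilon$ tend to zero. Since $\rho$ is fixed before $\varepsilon\to0$, everything reduces to bounding
\[
\limsup_{\varepsilon\to0}\int_{\mathcal N_\rho}|I_\varepsilon|\,dx
\]
by a quantity that tends to zero with $\delta,\eta,\kappa$. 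Here the pointwise estimate is too crude: the trivial bound $\varepsilon^{-1}\cdot|\mathcal N_\rho|$ blows up. We therefore cover the layer by Whitney-type balls and split them into good and bad ones at each scale.

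\emph{Good balls.} Let $y\in\partial^*\Omega$ and $r\sim\rho$ be such that Lemma~\ref{basic property u}(5) holds at scale $r$. Then $\Omega\cap B_{cr}(y)$, outside a slab of width $\sim\eta r$ around the approximating plane, is covered by cones $\hat B_{y',\kappa,r'}$ at smaller scales. On each such cone we integrate $|D^2u|\,|x|$ along the radial segments, which gives a contribution at most $\delta(\kappa^{-1}r')^{n-1}$. Summing over a Whitney decomposition, using $\mathscr H^{n-1}$-density $\le A_2$, and noting that the slab has measure $\lesssim \eta r^{n}$ with $|I_\varepsilon|$ there controlled via the uniform gradient trace of Lemma~\ref{basic property u}(4), the total is $\lesssim(\delta\kappa^{1-n}+\eta)\mathscr H^{n-1}(\partial\Omega\cap B_{2r}(y))$. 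A key point is that the radial direction $x$ is transversal to $\partial\Omega$ only in an averaged sense. We handle this by applying the bound only to the component of the difference quotient lying inside the cone, and by treating the segments exiting the cone (when $x$ is nearly tangent) as part of the slab term.

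\emph{Bad balls.} The points $y$ whose $r_0(y)$ from Lemma~\ref{basic property u}(5) is smaller than the chosen scale form the bad set. By Chebyshev applied to \eqref{beta number}, the set of $(y,s)$ with $\beta(y,s)>\eta$ has $\frac{ds}{s}\,d\mathscr H^{n-1}$-measure at most $A_1\eta^{-2}$. Hence, for all but a small proportion of dyadic scales $s\le\rho$, the $\mathscr H^{n-1}$-measure of bad points at scale $s$ is $o(1)$ as $\rho\to0$. Using Ahlfors regularity (lower bound) to count bad balls, their union has volume $\lesssim s\cdot o(1)$. On these balls we use only the crude Lipschitz bound, choosing $\varepsilon$ comparable to a good scale so that $\varepsilon^{-1}\cdot s\cdot o(1)=o(1)$. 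This is why the conclusion is a $\liminf$ and not a limit: we pick $\varepsilon_j\to0$ along good scales.

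The main obstacle is this last balancing. The crude bound costs a factor $\varepsilon^{-1}$ on a region of thickness comparable to the scale, so it must be matched against the smallness of the bad set at that scale. This is exactly what the square-function bound \eqref{beta number} provides, and only along a subsequence of scales. A secondary difficulty is relating the Hessian smallness at the density point $y$ (which gives non-uniform $r_0(y)$) to a uniform statement. For this we will apply Egorov's theorem to $r_0(\cdot)$ on $\partial^*\Omega$, relegating the exceptional set of small $\mathscr H^{n-1}$-measure to the bad balls.
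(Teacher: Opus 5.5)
Your first step (dominated convergence on $\Omega\setminus\mathcal N_\rho$ for fixed $\rho$) is fine. The boundary layer, however, cannot be controlled with the first-power bound $|I_\varepsilon(x)|\lesssim\varepsilon^{-1}|\nabla u((1+\varepsilon)x)-\nabla u((1-\varepsilon)x)|$. That bound throws away the cancellation in $I_\varepsilon$ and reduces everything to $\int|D^2u|$ over $\{C\varepsilon<\dist(x,\partial\Omega)<\rho\}$. With $\rho$ fixed, this region contains about $\log_2(\rho/\varepsilon)$ dyadic generations of Whitney cones. Your per-cone bound $\delta(\kappa^{-1}r')^{n-1}$, summed over a \emph{single} generation, already gives $\delta\kappa^{1-n}\mathscr H^{n-1}(\partial\Omega\cap B_{2r}(y))$. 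The true total is therefore this quantity times $\log(\rho/\varepsilon)$, which blows up as $\varepsilon\to0$ with $\delta,\kappa,\rho$ fixed. Nothing in your toolkit makes the sum converge. Lemma~\ref{basic property u}(5) gives smallness $\delta$ at every small scale but no decay in the scale, and scale-invariant interior estimates give $O(1)$ per generation. Nor is $D^2u\in L^1(\Omega)$ known.

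The other pieces of your layer estimate have the same multiscale defect. Bad balls at scales $s\gg\varepsilon$ cannot be paid for with the crude Lipschitz bound, since they cost $\varepsilon^{-1}s$ times the bad proportion, and the $\beta$-square-function selection only supplies one good scale, comparable to $\varepsilon$. The slab of width $\eta r$ at the fixed scale $r\sim\rho$ costs $\varepsilon^{-1}\eta\rho$. Lemma~\ref{basic property u}(4) is a qualitative convergence-in-measure statement and gives no $O(\varepsilon)$ control of $\nabla u((1+\varepsilon)x)-\nabla u((1-\varepsilon)x)$.

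The missing idea is a superlinear bound that keeps part of the cancellation. First split $I_\varepsilon$ into the two one-sided terms comparing $\nabla u(x)$ with $\nabla u((1\pm\varepsilon)x)$. Each one-sided integrand vanishes where $(1\pm\varepsilon)x\notin\Omega$, because $DV(0)=0$. Elsewhere, $DV(\xi)=D^2V(\xi)\xi$ and the fundamental theorem of calculus rewrite it as $\nabla u\cdot\bigl[D^2V(\nabla u)-\int_0^1D^2V(\nabla u+t(\alpha_\varepsilon-\nabla u))\,dt\bigr](\alpha_\varepsilon-\nabla u)/(2\varepsilon)$. The $C^\gamma$ continuity of the $0$-homogeneous $D^2V$ then bounds this by $C\varepsilon^{-1}|\nabla u((1+\varepsilon)x)-\nabla u(x)|^{1+\gamma}$.

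With this power, a Whitney ball of size $r$ contributes only $\varepsilon^{\gamma}r^{n-1-\gamma}$, so all scales $r\ge\kappa^{-1}\varepsilon$ sum geometrically to $C\kappa^\gamma$. This lets you split at $\kappa^{\pm1}\varepsilon$ instead of at a fixed $\rho$. The layer $\dist(x,\partial\Omega)<\kappa\varepsilon$ is $O(\kappa)$ by the Minkowski content. The good/bad analysis is then needed only on the single band $\kappa\varepsilon<\dist(x,\partial\Omega)\le\kappa^{-1}\varepsilon$, at the selected scales $\varepsilon=r_j$. Good balls are handled by Hessian smallness plus Sobolev--Poincar\'e. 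Bad balls are handled by the $1/|\log r_j|$ gain from \eqref{beta number} together with the lower Ahlfors bound. On that single band, your balancing argument essentially works as you describe.
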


\begin{proof}
Recall that $u\in W^{2,2}_{\rm loc}(\Omega)$ and that $V(\xi)=\frac12 H(\xi)^2$ is $2$-homogeneous. Hence, by Euler's theorem,
\[
DV(\xi)=D^2V(\xi)\,\xi \qquad \text{for }\xi\neq 0.
\]
Up to a translation (fixed once and for all), we may assume that $0\in\Omega$. In particular,
\begin{equation}\label{eq:xbound-diam}
|x|\le \diam(\Omega)\qquad \forall\,x\in\Omega.
\end{equation}
{Since the assertion is invariant under dilations of the independent variable, we also normalize
$\diam(\Omega)=1$ throughout the proof.}
We also recall that $u=0$ a.e.\ on $\mathbb R^n\setminus\Omega$, hence $\nabla u=0$ a.e.\ on $\mathbb R^n\setminus\Omega$.

\medskip
\noindent\textit{Step 1: Algebraic reduction to two symmetric terms.}
Set
\[
A_\varepsilon:=
\int_{\Omega} DV(\nabla u)\cdot \frac{\alpha_{\varepsilon}-\alpha_{-\varepsilon}}{2\varepsilon}\,dx
-
\int_{\Omega} \nabla u\cdot \frac{DV(\alpha_{\varepsilon})-DV(\alpha_{-\varepsilon})}{2\varepsilon}\,dx.
\]
Then
\begin{align*}
A_\varepsilon
&=\int_{\Omega} \Bigg[
DV(\nabla u)\cdot \frac{\alpha_{\varepsilon}-\nabla u}{2\varepsilon}
-\nabla u\cdot \frac{DV(\alpha_{\varepsilon})-DV(\nabla u)}{2\varepsilon}
\Bigg]dx\\
&\quad +\int_{\Omega} \Bigg[
DV(\nabla u)\cdot \frac{\nabla u-\alpha_{-\varepsilon}}{2\varepsilon}
-\nabla u\cdot \frac{DV(\nabla u)-DV(\alpha_{-\varepsilon})}{2\varepsilon}
\Bigg]dx.
\end{align*}
Define
\begin{align*}
I_\varepsilon^\pm
:=\Bigg|\int_{\Omega} \Bigg[
DV(\nabla u(x))\cdot \frac{\alpha_{\pm\varepsilon}(x)-\nabla u(x)}{2\varepsilon}
-\nabla u(x)\cdot \frac{DV(\alpha_{\pm\varepsilon}(x))-DV(\nabla u(x))}{2\varepsilon}
\Bigg]dx\Bigg|.
\end{align*}
Since the second bracket above is the negative of the integrand defining $I_\varepsilon^-$, we have
\[
|A_\varepsilon|\le I_\varepsilon^+ + I_\varepsilon^-.
\]
Therefore, to prove \eqref{change derivative} it suffices to show
\[
\liminf_{\varepsilon\to 0}\big(I_\varepsilon^+ + I_\varepsilon^-\big)=0.
\]
We begin by analyzing $I_\varepsilon^+$.

\medskip
\noindent\textit{Step 2: Restriction to the set $U$ and Taylor expansion.}
Note that
$$\alpha_\varepsilon(x)=\nabla u((1+\varepsilon)x)=0  \quad \text{ for a.e.  } \ x\in \Omega \ \text{ while } \ (1+\ez)x\notin \Omega,$$ 
and one checks directly that
the integrand in $I_\varepsilon^+$ vanishes (the two terms cancel, using $DV(0)=0$).
Thus we may restrict the integration to
\[
U:=\{x\in\Omega:\ (1+\varepsilon)x\in\Omega\}.
\]
On $U$, by the fundamental theorem of calculus,
\[
DV(\alpha_\varepsilon)-DV(\nabla u)
=\int_0^1 D^2V\bigl(\nabla u+t(\alpha_\varepsilon-\nabla u)\bigr)\,(\alpha_\varepsilon-\nabla u)\,dt,
\]
and using $DV(\xi)=D^2V(\xi)\xi$ we obtain
\begin{align*}
I_\varepsilon^+
&=\Bigg|\int_{U}\nabla u(x)\cdot \Bigg[
D^2V(\nabla u(x))
-\int_0^1 D^2V\bigl(\nabla u(x)+t(\alpha_\varepsilon(x)-\nabla u(x))\bigr)\,dt
\Bigg]\frac{\alpha_\varepsilon(x)-\nabla u(x)}{2\varepsilon}\,dx\Bigg|.
\end{align*}

\medskip
\noindent\textit{Step 3: Using the $C^\gamma$ regularity of $D^2V$ on the sphere.}
Let $\gamma_0 \in (0,1]$ be such that $D^2V\in C^{\gamma_0}(\mathbb S^{n-1})$, and fix $\gamma \in (0,\gamma_0)$.
Since $D^2V$ is $0$-homogeneous and $C^\gamma$ on $\mathbb S^{n-1}$, for every $\xi\neq 0$ and every $\zeta\in\mathbb R^n$ we have
\[
|D^2V(\xi)-D^2V(\xi+\zeta)|
\le C\,\bigg(\frac{|\zeta|}{|\xi|}\bigg)^\gamma.
\]
Applying this with $\xi=\nabla u(x)$ and $\zeta=t(\alpha_\varepsilon(x)-\nabla u(x))$, and using $|\nabla u|\le L$ a.e.
(Lemma~\ref{basic property u}(2)), we obtain
\begin{align*}
I_\varepsilon^+
&\le C\int_U |\nabla u(x)|\,\frac{|\alpha_\varepsilon(x)-\nabla u(x)|}{2\varepsilon}\,
\int_0^1 \bigg(\frac{t|\alpha_\varepsilon(x)-\nabla u(x)|}{|\nabla u(x)|}\bigg)^\gamma dt\,dx\\
&\le C(L,\gamma)\int_U \frac{|\alpha_\varepsilon(x)-\nabla u(x)|^{1+\gamma}}{2\varepsilon}\,dx.
\end{align*}
Thus it remains to show that
\begin{equation}\label{eq:goal-alpha}
\int_U \frac{|\nabla u((1+\varepsilon)x)-\nabla u(x)|^{1+\gamma}}{\varepsilon}\,dx\to 0
\qquad\text{along a sequence }\varepsilon\downarrow 0.
\end{equation}

\medskip
\noindent\textit{Step 4: Decomposition into $I_{1,\varepsilon}^++I_{2,\varepsilon}^++I_{3,\varepsilon}^+$.}
Recall that
\[
\mathcal N_{\delta}(\partial \Omega):=\left\{ x\in \mathbb R^n:\ \dist(x,\partial \Omega)\le \delta\,\diam(\Omega) \right\}.
\]
For $t>0$ define
\[
U_t:=\Big\{x\in \Omega\setminus \mathcal N_{t}(\partial\Omega):\ (1+\varepsilon)x\in\Omega\Big\}.
\]
Fix $\kappa\in(0,1)$ (to be sent to $0$ later). We decompose
\begin{align*}
\int_U \frac{|\nabla u((1+\varepsilon)x)-\nabla u(x)|^{1+\gamma}}{2\varepsilon}\,dx
&=\int_{U_{\kappa^{-1}\varepsilon}} \frac{|\nabla u((1+\varepsilon)x)-\nabla u(x)|^{1+\gamma}}{2\varepsilon}\,dx\\
&\quad +\int_{U_{\kappa\varepsilon}\setminus U_{\kappa^{-1}\varepsilon}} \frac{|\nabla u((1+\varepsilon)x)-\nabla u(x)|^{1+\gamma}}{2\varepsilon}\,dx\\
&\quad +\int_{U\setminus U_{\kappa\varepsilon}} \frac{|\nabla u((1+\varepsilon)x)-\nabla u(x)|^{1+\gamma}}{2\varepsilon}\,dx\\
&=: I_{1,\varepsilon}^+ + I_{2,\varepsilon}^+ + I_{3,\varepsilon}^+.
\end{align*}

\medskip
\noindent\textit{Step 5: Estimate of $I_{1,\varepsilon}^+$ (interior region).}
On $U_{\kappa^{-1}\varepsilon}$ we have $\dist(x,\partial\Omega)\ge \kappa^{-1}\varepsilon\,\diam(\Omega)$, hence both $x$ and $(1+\varepsilon)x$ {remain at a strictly positive distance from the boundary}.
Since $u\in W^{2,2}_{\rm loc}(\Omega)$, interior $W^{2,2}$ estimates for the uniformly elliptic equation
${\rm div}(DV(\nabla u))=-1$ imply that for each ball $B_{2r}(z)\subset\Omega$,
\[
\|D^2u\|_{L^2(B_r(z))}\le C\,r^{\frac n2-1}.
\]
By H\"older's inequality (since $1+\gamma<2$), this yields
\begin{equation}\label{eq:L1+g-Hess}
\int_{B_r(z)} |D^2u|^{1+\gamma}\,dx \le C\,r^{n-1-\gamma}\qquad\text{whenever }B_{2r}(z)\subset\Omega.
\end{equation}
Moreover, by the fundamental theorem of calculus along $t\mapsto (1+t)x$ and \eqref{eq:xbound-diam},
\[
\nabla u((1+\varepsilon)x)-\nabla u(x)=\int_0^\varepsilon D^2u((1+t)x)\,x\,dt,
\]
hence
\[
\frac{|\nabla u((1+\varepsilon)x)-\nabla u(x)|^{1+\gamma}}{\varepsilon}
\le C\,\varepsilon^\gamma\int_0^1 |D^2u((1+t\varepsilon)x)|^{1+\gamma}\,dt.
\]
Integrating over $U_{\kappa^{-1}\varepsilon}$ and using a Besicovitch covering argument with balls
$B_{r(x)}(x)$ where $r(x)\sim \dist(x,\partial\Omega)$, together with \eqref{eq:L1+g-Hess}, yields\footnote{Here we note that $\partial\Omega$ has finite $(n{-}1)$-dimensional upper Minkowski content (a consequence of
	\eqref{domain boundary1}), {which together with the bounded overlap in the Besicovitch covering gives the counting estimate}
	\[
	{\#\{B_{r_k}(x_k):\,2^{-m}\le r_k\le 2^{-m+1}\}\le C(n,M)\,2^{m(n-1)},}
	\] 
	{therefore}
	$$
	{\sum_k r_k^{n-1-\gamma} \leq C (\kappa^{-1}\varepsilon)^{-\gamma}.}
	$$
} 
\[
I_{1,\ez}^+ \le C(L,n,\diam(\Omega))\,\varepsilon^\gamma \sum_k r_k^{n-1-\gamma}
\le C(L,M,n,\diam(\Omega))\,\kappa^\gamma{,}
\]
{where $M$ is the constant corresponding to the Minkowski content \eqref{eq:minkowski-content}.} 

\medskip
\noindent\textit{Step 6: Estimate of $I_{3,\varepsilon}^+$ (very near the boundary).}
Since $|\nabla u|\le L$ a.e., we have
\[
I_{3,\varepsilon}^+\le C(L)\,\varepsilon^{-1}\,|\mathcal N_{\kappa\varepsilon}(\partial\Omega)|
=O(\kappa),
\]
using again the finite upper Minkowski content: $|\mathcal N_{\rho}(\partial\Omega)|=O(\rho)$ as $\rho\to 0$.

\medskip
\noindent\textit{Step 7: Estimate of $I^+_{2,\varepsilon}$ (intermediate region).}
We first split according to whether $(1+\varepsilon)x$ falls within the thin boundary layer.
By the finite upper Minkowski content, we have
\[
\bigl|\{x\in U_{\kappa\varepsilon}\setminus U_{\kappa^{-1}\varepsilon}:\ (1+\varepsilon)x\in \mathcal N_{\kappa\varepsilon}(\partial\Omega)\}\bigr|
=O(|\mathcal N_{\kappa\varepsilon}(\partial\Omega)|)=O(\kappa\varepsilon),
\]
hence, using $|\nabla u|\le L$,
\begin{equation}\label{2.7}
\int_{\{x\in U_{\kappa\varepsilon}\setminus U_{\kappa^{-1}\varepsilon}:\ (1+\varepsilon)x\in \mathcal N_{\kappa\varepsilon}(\partial\Omega)\}}
\frac{|\nabla u((1+\varepsilon)x)-\nabla u(x)|^{1+\gamma}}{2\varepsilon}\,dx
\le O(\kappa).
\end{equation}
We now consider the remaining points
\[
E:=\Bigl(U_{\kappa\varepsilon}\setminus U_{\kappa^{-1}\varepsilon}\Bigr)\cap\Bigl\{x:\ (1+\varepsilon)x\notin \mathcal N_{\kappa\varepsilon}(\partial\Omega)\Bigr\}.
\]

\smallskip
\noindent\emph{Uniform Hessian smallness on a large subset of $\partial^*\Omega$.}
{Fix $\kappa\in(0,1)$. Let $\bar\kappa=\bar\kappa(\kappa)\in(0,\kappa)$ be the geometric constant provided by
Lemma~\ref{lem:flat-cover}, and choose $\delta_\kappa>0$ so small that}
\begin{equation}\label{eq:deltakappa}
{
C(n,\gamma,A_2)\,\delta_\kappa^{1+\gamma}\,\bar\kappa^{-n}\le \kappa^\gamma .
}
\end{equation}
{Fix $\eta\in(0,1)$ and apply Lemma~\ref{basic property u}(5) with the parameters $\bar\kappa$ and
$\delta_\kappa$. For $\mathscr H^{n-1}$-a.e.\ $y\in\partial^*\Omega$ there exists $r_0(y)>0$ such that for all
$0<r<r_0(y)$,}
\begin{equation}\label{three inequ}
{
\beta(y,r)\le\eta,\qquad 
\frac{\mathscr H^{n-1}(\partial^*\Omega\cap B(y,r))}{\omega_{n-1}r^{n-1}}\in(1-\eta,1+\eta),
\qquad
\int_{\widehat B_{y,\bar\kappa,r}}|D^2u|\,dx \le \delta_\kappa\,(\bar\kappa^{-1}r)^{n-1},
}
\end{equation}
{where $\widehat B_{y,\bar\kappa,r}$ denotes the ball $\hat B_{y,\bar\kappa,r}$ of Lemma~\ref{basic property u}(5).}

{Fix once and for all a function $\theta$ as in Proposition~\ref{prop:ADR}, for instance $\theta(\eta)=\eta$.}
For $m\in\mathbb N$ define
\[
G_m:=\Bigl\{y\in\partial^*\Omega:\ \text{the three inequalities in \eqref{three inequ} hold for all }0<r\le 2^{-m}\Bigr\}.
\]
Since $G_m\uparrow \partial^*\Omega$ up to a null set, we may choose $m=m(\kappa,\eta)$ so large that
\[
\mathscr H^{n-1}(\partial^*\Omega\setminus G_m)\le \theta(\eta)\,\eta^{n-1}.
\]
Set
\[
G_{\kappa,\eta}:=G_m,\qquad r_{\kappa,\eta}:=2^{-m}.
\]
In particular, for every $y\in G_{\kappa,\eta}$ and every $0<r\le r_{\kappa,\eta}$ we have
\begin{equation}\label{7.1}
{
\int_{\widehat B_{y,\bar\kappa,r}}|D^2u|\,dx \le \delta_\kappa\,(\bar\kappa^{-1}r)^{n-1}.
}
\end{equation}

\smallskip
\noindent\emph{Good balls and the estimate on the good part.}
{We choose $\eta\le \eta_0(n,\kappa)$, where $\eta_0$ is given by Lemma~\ref{lem:flat-cover}, and apply
Proposition~\ref{prop:ADR} with the auxiliary set $G_\eta=G_{\kappa,\eta}$. Let $\{r_j\}$ and the coverings
$\mathcal B_{r_j}=\mathcal G_{r_j}\cup\mathcal R_{r_j}$ be the resulting sequence and good/bad decomposition.
Let $c_\kappa$ be the constant from Lemma~\ref{lem:flat-cover}. In the rest of the proof we restrict to
$\varepsilon=r_j\le c_\kappa r_{\kappa,\eta}$; this only discards finitely many selected scales.}

{For $B(y_h,\varepsilon)\in\mathcal G_\varepsilon$, set}
\[
{
\widehat B_h:=\widehat B_{y_h,\bar\kappa,\varepsilon}\subset\subset\Omega,\qquad
E_h:=\{x\in E:\ x\in \widehat B_h,\ (1+\varepsilon)x\in \widehat B_h\}.
}
\]
{By Lemma~\ref{lem:flat-cover}, applied with $r=\varepsilon$ (so that the present set $E$ is $E_{\kappa,r}$),}
\[
{
E\setminus\bigcup_{B(y_h,\varepsilon)\in\mathcal G_\varepsilon}E_h
\subset
\bigcup_{B(y_h,\varepsilon)\in\mathcal R_\varepsilon}B(y_h,C_\kappa\varepsilon).
}
\]
		{For $x\in E_h$, letting $(\nabla u)_{\widehat B_h}:=\frac{1}{|\widehat B_h|}\int_{\widehat B_h}\nabla u$,
		we have}
\[
|\nabla u((1+\varepsilon)x)-\nabla u(x)|^{1+\gamma}
\le C_\gamma\Bigl(|\nabla u((1+\varepsilon)x)-(\nabla u)_{\widehat B_h}|^{1+\gamma}
+|\nabla u(x)-(\nabla u)_{\widehat B_h}|^{1+\gamma}\Bigr).
\]
Integrating over $E_h$ and using the change of variables $z=(1+\varepsilon)x$ in the first term
(note that $z\in\widehat B_h$ for $x\in E_h$) yields
\begin{equation}\label{7.2}
\int_{E_h}\frac{|\nabla u((1+\varepsilon)x)-\nabla u(x)|^{1+\gamma}}{2\varepsilon}\,dx
\le C\,\varepsilon^{-1}\int_{\widehat B_h}|\nabla u-(\nabla u)_{\widehat B_h}|^{1+\gamma}\,dx.
\end{equation}
By Sobolev--Poincar\'e applied to $\nabla u$ on $\widehat B_h$, and choosing
$\gamma>0$ so small that $1+\gamma\le \frac{n}{n-1}$, we get
\begin{equation}\label{7.3}
\int_{\widehat B_h}|\nabla u-(\nabla u)_{\widehat B_h}|^{1+\gamma}\,dx
\le {C(n,\gamma)\,(\bar\kappa^{-1}\varepsilon)^{1+\gamma}
\left(\int_{\widehat B_h}|D^2u|\,dx\right)^{1+\gamma}(\bar\kappa^{-1}\varepsilon)^{-n\gamma}.}
\end{equation}
Using \eqref{7.1} with $r=\varepsilon$ (since $y_h\in G_{\kappa,\eta}$ and $\varepsilon\le r_{\kappa,\eta}$),
\[
{
\int_{\widehat B_h}|D^2u|\,dx \le \delta_\kappa\,(\bar\kappa^{-1}\varepsilon)^{n-1},
}
\]
and plugging this into \eqref{7.3} and then \eqref{7.2} gives
\[
{
\int_{E_h}\frac{|\nabla u((1+\varepsilon)x)-\nabla u(x)|^{1+\gamma}}{2\varepsilon}\,dx
\le C(n,\gamma)\,\delta_\kappa^{1+\gamma}\,\bar\kappa^{-n}\,\varepsilon^{n-1}.
}
\]
{Summing over $B(y_h,\varepsilon)\in\mathcal G_\varepsilon$} and using the bounded overlap together with
the finiteness of the upper Minkowski content (so that the number of such balls is
{$\lesssim \varepsilon^{-(n-1)}$; see Footnote 4}) and \eqref{eq:deltakappa}, we obtain
\begin{equation}\label{2.8}
\sum_{B(y_h,\varepsilon)\in\mathcal G_\varepsilon}
\int_{E_h}\frac{|\nabla u((1+\varepsilon)x)-\nabla u(x)|^{1+\gamma}}{2\varepsilon}\,dx
\le {C(M,n,\mathrm{diam}(\Omega))\,\kappa^\gamma.}
\end{equation}

\smallskip
\noindent\emph{Bad part and choice of scales.}
{By Lemma~\ref{lem:flat-cover} and Proposition~\ref{prop:ADR}(\ref{item:badcount}), the remaining portion of $E$ is contained in
the $C_\kappa\varepsilon$-enlargement of at most}
\[
{
C(n,A_1,A_2)\,\varepsilon^{-(n-1)}\Bigl(|\log\varepsilon|^{-1}\eta^{-2}+\theta(\eta)\Bigr)
}
\]
{bad balls. We choose the gauge $\theta(\eta)=o_\eta(1)$, hence}
\begin{equation}\label{2.9}
{
\Bigg|E\setminus\bigcup_{B(y_h,\varepsilon)\in\mathcal G_\varepsilon}E_h\Bigg|
\le C(n,A_1,A_2,\kappa)\,\varepsilon
\Bigl(|\log\varepsilon|^{-1}\eta^{-2}+o_\eta(1)\Bigr).
}
\end{equation}
{Since $|\nabla u|\le L$, the contribution of this bad set to $I^+_{2,\varepsilon}$ is bounded by}
$$
{
C(n,A_1,A_2,L,\kappa)\,\bigl(|\log\varepsilon|^{-1}\eta^{-2}+o_\eta(1)\bigr).
}
$$
{Combining \eqref{2.7}, \eqref{2.8}, and \eqref{2.9}, we obtain the following estimate on $I^+_{2,\varepsilon}$
(for $\varepsilon=r_j$)}
$$
{I_{2, r_j}^+\le O(\kappa) +  C\,\kappa^\gamma
 + C(n,A_1,A_2,L,\kappa)\bigl(|\log r_j|^{-1}\eta^{-2}+o_\eta(1)\bigr),}
$$
{where the constant $C$ in the second term is independent of $\kappa,\eta$ and $j$.}

\medskip
\noindent\textit{Step 8: Conclusion.}
Fix $\kappa\in(0,1)$ and {then choose $0<\eta\le\eta_0(n,\kappa)$}. Let $r_{\kappa,\eta}>0$ be the scale produced in
Step~7 (so that the estimate \eqref{7.1} is valid for all
$0<\varepsilon\le r_{\kappa,\eta}$), {and let $c_\kappa$ be the geometric constant from Lemma~\ref{lem:flat-cover}}.
Let $\{r_j\}_{j\ge 1}$ be the sequence given in Step~7 by Proposition~\ref{prop:ADR} {with auxiliary set $G_{\kappa,\eta}$}.
Since $r_j\downarrow 0$, by discarding finitely many indices we may assume that
{$r_j\le c_\kappa r_{\kappa,\eta}$} for all $j$.

For $\varepsilon=r_j$, combining the bounds in Steps~5--7 yields
\[
I^+_{r_j}\le C\,\kappa^\gamma + O(\kappa)
+ {C(n,A_1,A_2,L,\kappa)}\Bigl(|\log r_j|^{-1}\eta^{-2}+o_\eta(1)\Bigr),
\]
and the same estimate holds for $I^-_{r_j}$. Hence, for $\varepsilon=r_j$,
\[
I^+_{r_j}+I^-_{r_j}\le C\,\kappa^\gamma + O(\kappa)
+ {C(n,A_1,A_2,L,\kappa)}\Bigl(|\log r_j|^{-1}\eta^{-2}+o_\eta(1)\Bigr).
\]
Letting $j\to\infty$ (so that $r_j\downarrow 0$ and $|\log r_j|^{-1}\to 0$), we obtain
\[
\limsup_{j\to\infty}\,\bigl(I^+_{r_j}+I^-_{r_j}\bigr)
\le C\,\kappa^\gamma + O(\kappa) + {C(n,A_1,A_2,L,\kappa)}\,o_\eta(1).
\]
Finally, we first let $\eta\downarrow 0$ (so that $o_\eta(1)\to 0$), and then let
$\kappa\downarrow 0$, to conclude
\[
\limsup_{j\to\infty}\, \bigl(I^+_{r_j}+I^-_{r_j}\bigr)=0.
\]
Since $I^+_\varepsilon+I^-_\varepsilon\ge 0$, this implies
\[
\liminf_{\varepsilon\to 0}\bigl(I^+_\varepsilon+I^-_\varepsilon\bigr)=0.
\]
As explained in Step~1, this yields \eqref{change derivative} and concludes the proof.
\end{proof}

We now establish the following volume identity, which is a cornerstone of our proof. In the Euclidean case, as shown in
\cite{W1971}, this identity can be derived via Pohozaev's method. For analogous results in the smooth anisotropic setting, we refer to
\cite[Lemma~4.2]{CS2009} and \cite[Theorem~5]{WX2011}. Our approach follows \cite[Lemma~2.2]{FZ2025}, with additional care due to
the nonlinearity of the operator.

\begin{lem}\label{lem:volume}
Let $\Omega\subset \mathbb R^n$ be a bounded set of finite perimeter satisfying {\eqref{beta number} and} \eqref{domain boundary1},
and let $u \in W^{1,2}(\mathbb R^n)$ satisfy \eqref{eq:weak set finite per}. Then
\begin{equation}\label{volume}
(n+2)\int_{\Omega}u\,dx =  {\mathbf{c}^2 n}\, |\Omega|.
\end{equation}
\end{lem}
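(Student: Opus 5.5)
The identity \eqref{volume} is a weak form of the Pohozaev identity. I would obtain it by testing \eqref{eq:weak set finite per} with $\varphi=u$ and with a dilation difference quotient of $u$, which stands in for $x\cdot\nabla u$. Lemma~\ref{second derivative} replaces the chain rule, which is unavailable without global $W^{2,2}$ regularity. As in the proof of Lemma~\ref{second derivative}, I translate so that $0\in\Omega$. Since $u$ is Lipschitz and vanishes off $\Omega$, both test functions are admissible by density.

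\emph{Step 1: testing with $\varphi=u$.} The boundary term vanishes because $u=0$ on $\partial\Omega$. Since $DV(\xi)\cdot\xi=2V(\xi)=H(\xi)^2$, this gives
\[
\int_\Omega H(\nabla u)^2\,dx=\int_\Omega u\,dx .
\]

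\emph{Step 2: testing with the dilation quotient.} Set $\varphi_\varepsilon(x):=\frac{u((1+\varepsilon)x)-u((1-\varepsilon)x)}{2\varepsilon}$. Then
\[
\nabla\varphi_\varepsilon=\frac{\alpha_\varepsilon-\alpha_{-\varepsilon}}{2\varepsilon}+\frac{\alpha_\varepsilon+\alpha_{-\varepsilon}}{2}.
\]
The right-hand side of the equation is handled as follows.
\begin{itemize}
\item[(a)] The volume term satisfies $\int_\Omega\varphi_\varepsilon\,dx\to\int_\Omega x\cdot\nabla u\,dx=-n\int_\Omega u\,dx$.
\item[(b)] On $\partial^*\Omega$ we have $|\varphi_\varepsilon(x)|\le L|x|$. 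By the blow-up in Lemma~\ref{basic property u}(4), at each $x\in\partial^*\Omega$,
\[
\varphi_\varepsilon(x)\to \frac{\mathbf a(x)}{2}\bigl((-\nu_x\cdot x)_+-(\nu_x\cdot x)_+\bigr)=-\frac{\mathbf a(x)}{2}\,\nu_x\cdot x .
\]
\item[(c)] By dominated convergence and $\mathbf a H(-\nu)=\mathbf c$, the boundary term $-\mathbf c\int_{\partial^*\Omega}\varphi_\varepsilon H(-\nu)\,d\mathscr H^{n-1}$ tends to $\frac{\mathbf c^2}{2}\int_{\partial^*\Omega}x\cdot\nu\,d\mathscr H^{n-1}=\frac{\mathbf c^2 n}{2}|\Omega|$, using the divergence theorem for sets of finite perimeter.
\end{itemize}
On the left-hand side, the symmetric average term satisfies $\int_\Omega DV(\nabla u)\cdot\frac{\alpha_\varepsilon+\alpha_{-\varepsilon}}{2}\,dx\to\int_\Omega 2V(\nabla u)\,dx=\int_\Omega u\,dx$. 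This uses continuity of dilations in $L^2$.

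\emph{Step 3: the difference-quotient term.} Call it $A_\varepsilon:=\int_\Omega DV(\nabla u)\cdot\frac{\alpha_\varepsilon-\alpha_{-\varepsilon}}{2\varepsilon}\,dx$. By Lemma~\ref{second derivative}, along a sequence $\varepsilon_j\downarrow0$, $A_\varepsilon$ is asymptotic to
\[
B_\varepsilon:=\int_\Omega \nabla u\cdot \frac{DV(\alpha_\varepsilon)-DV(\alpha_{-\varepsilon})}{2\varepsilon}\,dx .
\]
The change of variables $y=(1\pm\varepsilon)x$ in $B_\varepsilon$ moves the dilation onto the factor $\nabla u$, at the cost of Jacobians $(1\pm\varepsilon)^{-n}$. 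Expanding the Jacobians yields
\[
B_\varepsilon=-\tilde A_\varepsilon-n\int_\Omega DV(\nabla u)\cdot\nabla u\,dx+o(1),
\]
where $\tilde A_\varepsilon$ is $A_\varepsilon$ with the dilations $1\pm\varepsilon$ replaced by $(1\mp\varepsilon)^{-1}$.

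Granting $\tilde A_\varepsilon-A_\varepsilon\to0$, we get $2A_\varepsilon\to-n\int_\Omega 2V(\nabla u)\,dx$, that is, $A_\varepsilon\to-\frac n2\int_\Omega u\,dx$. The left-hand side therefore tends to $(1-\frac n2)\int_\Omega u\,dx$. Equating with the right-hand side gives
\[
\Bigl(1-\frac n2\Bigr)\int_\Omega u\,dx=\frac{\mathbf c^2n}{2}|\Omega|-n\int_\Omega u\,dx ,
\]
which is exactly $(n+2)\int_\Omega u\,dx=\mathbf c^2 n|\Omega|$.

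\emph{Main obstacle.} The delicate point is the mismatch between the scales $1\pm\varepsilon$ and $(1\mp\varepsilon)^{-1}$ in Step~3. The two differ by $O(\varepsilon^2)$, but the quotient is divided by $\varepsilon$, and $\nabla u$ is not globally $W^{1,1}$. The first fix I would try is to use the geometric test function $\frac{u((1+\varepsilon)x)-u((1+\varepsilon)^{-1}x)}{2\varepsilon}$ instead. With it the change of variables is exact: the pair $\{(1+\varepsilon),(1+\varepsilon)^{-1}\}$ is mapped to itself, so $B_\varepsilon=-A_\varepsilon-n\int_\Omega DV(\nabla u)\cdot\nabla u\,dx+o(1)$ with no leftover term. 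I would then rerun the proof of Lemma~\ref{second derivative} verbatim for this pair of dilations. Its Steps 4--8 only use that the dilation factor is $1+O(\varepsilon)$, so they carry over. The boundary limit in Step~2 is unchanged at first order, and the same subsequence $\varepsilon=r_j$ is used throughout.
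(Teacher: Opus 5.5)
Your proposal is correct. Up to the difference-quotient term it coincides with the paper's proof: same $\varphi_\varepsilon$, and the same limits of the volume, boundary and averaged terms. The routes diverge at $A_\varepsilon$, which is the paper's $R_2(\varepsilon)$.

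After Lemma~\ref{second derivative}, the paper integrates by parts in space and uses the equation a second time at the dilated scales, pairing $u$ with $(1\pm\varepsilon)\Delta_Hu((1\pm\varepsilon)x)$. This needs a second boundary blow-up (the functions $\psi_\varepsilon$ on $\partial^*\Omega$) and a volume estimate on $\Omega\setminus(1\pm\varepsilon)^{-1}\Omega$ via the linear growth of $u$. It yields $R_2(\varepsilon)\to-\frac{\mathbf c^2}{2}\int_{\partial^*\Omega}x\cdot\nu\,d\mathscr H^{n-1}+\int_\Omega u\,dx$, so the boundary flux enters both sides of the final identity.

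You instead integrate by parts in the dilation parameter. The substitution $y=(1\pm\varepsilon)x$ turns $B_\varepsilon$ into $-A_\varepsilon-n\int_\Omega 2V(\nabla u)\,dx+o(1)$, exactly so for your symmetric pair. This is a weak form of $\int_{\mathbb R^n}x\cdot\nabla[V(\nabla u)]\,dx=-n\int V(\nabla u)\,dx$ for the zero extension, with the jump of $V(\nabla u)$ across $\partial\Omega$ built in. What this buys you:
\begin{enumerate}
\item[(i)] $A_\varepsilon\to-\frac n2\int_\Omega u\,dx$ with no boundary information at all, consistent with the paper's limit once \eqref{volume} holds;
\item[(ii)] the blow-up of Lemma~\ref{basic property u}(4) is used only once;
\item[(iii)] you never pair $u$ with singular measures on dilated boundaries.
\end{enumerate}
Both routes lean on Lemma~\ref{second derivative} in the same way.

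The symmetric pair $(1+\varepsilon)^{\pm1}$ costs a rerun of Lemma~\ref{second derivative} for the contraction $(1+\varepsilon)^{-1}$. That is at the same level of detail as the paper's own claim that $I^-_\varepsilon$ obeys the same estimate.

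You could also keep the original test function. The difference $\tilde A_\varepsilon-A_\varepsilon$ involves dilations of relative size $O(\varepsilon^2)$, divided by $\varepsilon$. The layer $\{\dist(\cdot,\partial\Omega)\lesssim\varepsilon^2\}$ contributes $O(\varepsilon)$, by $|\nabla u|\le L$ and the Minkowski bound. Off the layer, use the fundamental theorem of calculus along rays and $\int_{B_r}|D^2u|\,dx\lesssim r^{n-1}$ on Whitney balls. This gives $\int_{\{\dist(\cdot,\partial\Omega)>\varepsilon^2\}}|D^2u|\,dx\lesssim\log(1/\varepsilon)$, hence a contribution $O(\varepsilon\log(1/\varepsilon))$. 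So the lack of global $W^{1,1}$ regularity of $\nabla u$ is harmless at this order.
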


\begin{proof}
By Lemma~\ref{basic property u}(3), we may assume that $\Omega$ is open.
Up to a translation (fixed once and for all), we also assume $0\in\Omega$, so that $|x|\le \diam(\Omega)$ for all $x\in\Omega$.

\medskip
\noindent\textit{Step 1: Testing with the dilation difference quotient.}
Given $\varepsilon>0$, set
\begin{equation}\label{eq:varphi eps}
\varphi_\varepsilon(x)= \frac{u((1+\varepsilon) x)-u((1-\varepsilon)x)}{2\varepsilon}.
\end{equation}
Since $u$ is Lipschitz (Lemma~\ref{basic property u}(2)) {for some Lipschitz constant $L>0$}, $\varphi_\varepsilon$ is Lipschitz for $\varepsilon>0$ fixed and satisfies
\[
{|\varphi_\varepsilon(x)|\le L \diam(\Omega)\qquad \forall\,x\in\mathbb R^n,\ \forall\,\varepsilon>0.}
\]
By approximation (mollifying $\varphi_\varepsilon$), we may test \eqref{eq:weak set finite per} against $\varphi_\varepsilon$ and obtain
\begin{equation}\label{eq:test vol}
-\mathbf{c} \int_{\partial^*\Omega}  \varphi_\varepsilon \, H(-\nu_x) \, d\mathscr H^{n-1}
+\int_{\Omega} \varphi_\varepsilon\, dx
=  \int_{\Omega} DV(\nabla u)\cdot \nabla\varphi_\varepsilon \, dx .
\end{equation}

\medskip
\noindent\textit{Step 2: The term $\int_\Omega \varphi_\varepsilon$.}
For a.e.\ $x\in\Omega$ we have $\varphi_\varepsilon(x)\to \nabla u(x)\cdot x$ as $\varepsilon\to 0$. Hence, by dominated convergence,
\[
\int_{\Omega} \varphi_\varepsilon\, dx \to \int_{\Omega} \nabla u\cdot x \, dx.
\]
Since $u\in W^{1,2}_0(\Omega)$ and ${\rm div}(x)=n$, an integration by parts yields
\begin{equation}\label{left 1}
\int_{\Omega} \nabla u\cdot x \, dx=-\int_\Omega u\,{\rm div}(x)\,dx=-n\int_{\Omega} u\, dx,
\qquad\text{so}\qquad
\int_{\Omega} \varphi_\varepsilon\, dx \to -n\int_{\Omega} u\, dx .
\end{equation}

\medskip
\noindent\textit{Step 3: The boundary term.}
Fix $x\in\partial^*\Omega$. Since $u=0$ a.e.\ on $\mathbb R^n\setminus\Omega$ and $u$ is Lipschitz, the boundary blow-up
from Lemma~\ref{basic property u}(4) gives, for any fixed $z\in\mathbb R^n$,
\[
\frac{u(x+r z)}{r}\to \mathbf a(x)\,(-\nu_x\cdot z)_+\qquad\text{as }r\to 0,
\quad\text{with }\ \mathbf a(x)\,H(-\nu_x)=\mathbf c.
\]
Apply this with $r=\varepsilon$ and $z=\pm x$:
\[
\frac{u(x+\varepsilon x)}{\varepsilon}\to \mathbf a(x)\,(-\nu_x\cdot x)_+,
\qquad
\frac{u(x-\varepsilon x)}{\varepsilon}\to \mathbf a(x)\,(\nu_x\cdot x)_+.
\]
Therefore
\[
\varphi_\varepsilon(x)
=\frac{u(x+\varepsilon x)-u(x-\varepsilon x)}{2\varepsilon}
\ \longrightarrow\
\frac{\mathbf a(x)}{2}\Big( (-\nu_x\cdot x)_+-(\nu_x\cdot x)_+\Big)
=-\frac{\mathbf a(x)}{2}\,(\nu_x\cdot x),
\]
since for any $s\in\mathbb R$, $(-s)_+-s_+=-s$.
By dominated convergence (recall $|\varphi_\varepsilon|\le C$) and $\mathbf a(x)H(-\nu_x)=\mathbf c$, we obtain
\begin{equation}\label{left 2}
-\mathbf{c} \int_{\partial^*\Omega}  \varphi_\varepsilon \, H(-\nu_x)\, d\mathscr H^{n-1}
\to
\frac{\mathbf{c}^2}{2} \int_{\partial^*\Omega} (\nu_x \cdot x)\, d\mathscr H^{n-1}
\qquad\text{as }\varepsilon\to 0.
\end{equation}

\medskip
\noindent\textit{Step 4: Expand $\nabla\varphi_\varepsilon$ and split the RHS.}
We compute
\begin{align*}
\nabla\varphi_\varepsilon(x)
&= \frac{(1+\varepsilon)\nabla u((1+\varepsilon)x)-(1-\varepsilon)\nabla u((1-\varepsilon)x)}{2\varepsilon}\\
&= \frac{\nabla u((1+\varepsilon)x)-\nabla u((1-\varepsilon)x)}{2\varepsilon}
+ \frac{\nabla u((1+\varepsilon)x)+\nabla u((1-\varepsilon)x)}{2}.
\end{align*}
Accordingly,
\begin{align*}
\int_{\Omega} DV(\nabla u)\cdot \nabla\varphi_\varepsilon \, dx
&=
\underbrace{\int_{\Omega} DV(\nabla u(x))\cdot
\frac{\nabla u((1+\varepsilon)x)+\nabla u((1-\varepsilon)x)}{2}\, dx}_{=:R_1(\varepsilon)}\\
&\quad +\underbrace{\int_{\Omega} DV(\nabla u(x))\cdot
\frac{\nabla u((1+\varepsilon)x)-\nabla u((1-\varepsilon)x)}{2\varepsilon}\, dx}_{=:R_2(\varepsilon)}.
\end{align*}

\medskip
\noindent\textit{Step 5: Limit of $R_1(\varepsilon)$.}
Since $\nabla u((1\pm\varepsilon)x)\to \nabla u(x)$ for a.e.\ $x\in\Omega$ and $DV(\nabla u)\in L^2(\Omega)$, dominated convergence yields
\[
R_1(\varepsilon)\to \int_\Omega DV(\nabla u)\cdot \nabla u\,dx.
\]
Testing \eqref{eq:weak set finite per} with $\varphi=u$ (by density in $W^{1,2}_0(\Omega)$) gives
\[
\int_\Omega DV(\nabla u)\cdot \nabla u\,dx = \int_\Omega u\,dx,
\]
hence
\begin{equation}\label{right 1}
R_1(\varepsilon)\to \int_\Omega u\,dx.
\end{equation}

\medskip
\noindent\textit{Step 6: Treat $R_2(\varepsilon)$ via Lemma~\ref{second derivative}.}
{We choose $\varepsilon$ as the sequence taking the liminf in Lemma~\ref{second derivative}.}
By Lemma~\ref{second derivative} (with $\alpha_{\pm\varepsilon}(x)=\nabla u((1\pm\varepsilon)x)$) and the {weak definition of divergence}, we have
\begin{align*}
R_2(\varepsilon)
&=
\int_{\Omega} \nabla u(x)\cdot
\frac{DV(\nabla u((1+\varepsilon)x))-DV(\nabla u((1-\varepsilon)x))}{2\varepsilon}\,dx
+o(1)\\
&=
-\int_{\Omega} u(x)\,
\frac{{\rm div}\!\big(DV(\nabla u((1+\varepsilon)x))\big)-{\rm div}\!\big(DV(\nabla u((1-\varepsilon)x))\big)}{2\varepsilon}\,dx
+o(1).
\end{align*}
where {we used $u\in W^{1,2}_0(\Omega)$ as a test function.}
Since 
$${\rm div}(DV(\nabla u((1\pm\varepsilon)x)))=(1\pm\varepsilon)\,\Delta_Hu((1\pm\varepsilon)x),$$ 
we get
\begin{equation}\label{eq:R2-main}
R_2(\varepsilon)
=
-\int_{\Omega} u(x)\,
\frac{(1+\varepsilon)\Delta_H u((1+\varepsilon)x)-(1-\varepsilon)\Delta_H u((1-\varepsilon)x)}{2\varepsilon}\,dx
+o(1).
\end{equation}

\medskip
\noindent\textit{Step 7: Compute the integral in \eqref{eq:R2-main}.}
Set
\[
J(\varepsilon):=\int_{\Omega} u(x)\,
\frac{(1+\varepsilon)\Delta_H u((1+\varepsilon)x)-(1-\varepsilon)\Delta_H u((1-\varepsilon)x)}{2\varepsilon}\,dx.
\]
Using \eqref{eq:weak set finite per}, i.e.
\[
\Delta_H u=\mathbf{c}H(-\nu)\,\mathscr{H}^{n-1}\llcorner \partial^*\Omega - \mathbf{1}_{\Omega}\,dx,
\]
a change of variables gives
\begin{align*}
J(\varepsilon)
=&\ \frac{\mathbf c}{2\varepsilon}\Bigg[
{\int_{(1+\varepsilon)^{-1}\partial^*\Omega} u(x)\,H(-\nu_{(1+\varepsilon)x})\, d\mathscr H^{n-1}(x)}\\
&\qquad \qquad \qquad \qquad \qquad 
{-\int_{(1-\varepsilon)^{-1}\partial^*\Omega} u(x)\,H(-\nu_{(1-\varepsilon)x})\, d\mathscr H^{n-1}(x)}
\Bigg]\\
&\quad +\frac{1}{2\varepsilon}\Bigg[
-\int_{(1+\varepsilon)^{-1}\Omega}(1+\varepsilon)\,u(x)\,dx
+\int_{(1-\varepsilon)^{-1}\Omega}(1-\varepsilon)\,u(x)\,dx
\Bigg].
\end{align*}

\smallskip
\noindent\emph{Boundary part.}
Since dilation preserves normals and $d\mathscr H^{n-1}$ scales by $(1\pm\varepsilon)^{-(n-1)}$, we can rewrite the boundary term as
\begin{multline*}
\frac{\mathbf c}{2\varepsilon}\Bigg[
{\int_{(1+\varepsilon)^{-1}\partial^*\Omega} u(x)\,H(-\nu_{(1+\varepsilon)x})\, d\mathscr H^{n-1}(x)
-\int_{(1-\varepsilon)^{-1}\partial^*\Omega} u(x)\,H(-\nu_{(1-\varepsilon)x})\, d\mathscr H^{n-1}(x)}
\Bigg]\\
=\ {\mathbf c\int_{\partial^*\Omega}\psi_\varepsilon(x)\,H(-\nu_x)\,d\mathscr H^{n-1}(x),}
\end{multline*}
where we set
\begin{equation}\label{def:psi-eps}
\psi_\varepsilon(x):=
{\frac{(1+\varepsilon)^{1-n}u((1+\varepsilon)^{-1}x)-(1-\varepsilon)^{1-n}u((1-\varepsilon)^{-1}x)}{2\varepsilon}.}
\end{equation}

\smallskip
\noindent\emph{Volume part.}
{Since $u\equiv0$ a.e.\ outside $\Omega$, set}
\[
{
A_\varepsilon^+:=\Omega\setminus (1+\varepsilon)^{-1}\Omega,\qquad
A_\varepsilon^-:=\Omega\setminus (1-\varepsilon)^{-1}\Omega .
}
\]
{Then}
\[
{
\int_{(1+\varepsilon)^{-1}\Omega}u\,dx=\int_\Omega u\,dx-\int_{A_\varepsilon^+}u\,dx,\qquad
\int_{(1-\varepsilon)^{-1}\Omega}u\,dx=\int_\Omega u\,dx-\int_{A_\varepsilon^-}u\,dx .
}
\]
{If $x\in A_\varepsilon^\pm$, then $x\in\Omega$ while $(1\pm\varepsilon)x\notin\Omega$. Hence the segment joining
$x$ to $(1\pm\varepsilon)x$ meets $\partial\Omega$ and, since $|x|\le\diam(\Omega)$,}
\[
{
\dist(x,\partial\Omega)\le \varepsilon |x|\le \varepsilon\,\diam(\Omega).
}
\]
{Thus $A_\varepsilon^\pm\subset\mathcal N_\varepsilon(\partial\Omega)$. By the boundary linear growth
$u(x)\le C\,\dist(x,\partial\Omega)$ and the finite upper Minkowski-content bound,}
\[
{
\int_{A_\varepsilon^\pm}u\,dx\le C\varepsilon\,|\mathcal N_\varepsilon(\partial\Omega)|=O(\varepsilon^2).
}
\]
{Consequently the volume part equals}
\begin{align*}
&\frac{1}{2\varepsilon}\Bigg[
-(1+\varepsilon)\int_{(1+\varepsilon)^{-1}\Omega}u\,dx
+(1-\varepsilon)\int_{(1-\varepsilon)^{-1}\Omega}u\,dx
\Bigg]\\
&\qquad= -\int_\Omega u\,dx
+\frac{(1+\varepsilon)\int_{A_\varepsilon^+}u\,dx
-(1-\varepsilon)\int_{A_\varepsilon^-}u\,dx}{2\varepsilon}
=-\int_\Omega u\,dx+o(1).
\end{align*}

\smallskip
\noindent\emph{Putting boundary + volume together.}
We have shown
\begin{equation}\label{eq:Jeps}
J(\varepsilon)=\mathbf c\int_{\partial^*\Omega}\psi_\varepsilon\,H(-\nu)\,d\mathscr H^{n-1}
-\int_\Omega u\,dx + o(1).
\end{equation}

\medskip
\noindent\textit{Step 8: Boundary limit for $\psi_\varepsilon$.}
Fix $x\in\partial^*\Omega$. Note that
\[
(1+\varepsilon)^{-1}x = x-\frac{\varepsilon}{1+\varepsilon}x,\qquad
(1-\varepsilon)^{-1}x = x+\frac{\varepsilon}{1-\varepsilon}x.
\]
Applying Lemma~\ref{basic property u}(4) with $r=\frac{\varepsilon}{1\pm\varepsilon}$ and $z=\mp x$, we obtain
\[
\frac{u((1+\varepsilon)^{-1}x)}{\varepsilon}\to \mathbf a(x)\,(\nu_x\cdot x)_+,
\qquad
\frac{u((1-\varepsilon)^{-1}x)}{\varepsilon}\to \mathbf a(x)\,(-\nu_x\cdot x)_+.
\]
Since {$(1\pm\varepsilon)^{1-n}\to 1$}, it follows from \eqref{def:psi-eps} that
\[
\psi_\varepsilon(x)\to
\frac{\mathbf a(x)}{2}\Big((\nu_x\cdot x)_+-(-\nu_x\cdot x)_+\Big)
=\frac{\mathbf a(x)}{2}\,(\nu_x\cdot x).
\]
By dominated convergence and $\mathbf a(x)H(-\nu_x)=\mathbf c$ we conclude
\begin{equation}\label{eq:psi-limit}
\mathbf c\int_{\partial^*\Omega}\psi_\varepsilon\,H(-\nu)\,d\mathscr H^{n-1}
\ \longrightarrow\
\frac{\mathbf c^2}{2}\int_{\partial^*\Omega}(\nu_x\cdot x)\,d\mathscr H^{n-1}.
\end{equation}

\medskip
\noindent\textit{Step 9: Conclude the limit of $R_2(\varepsilon)$.}
Combining \eqref{eq:R2-main}, \eqref{eq:Jeps}, and \eqref{eq:psi-limit} gives
\begin{equation}\label{right 2}
R_2(\varepsilon)\to
-\frac{\mathbf c^2}{2}\int_{\partial^*\Omega}(\nu_x\cdot x)\,d\mathscr H^{n-1}
+\int_\Omega u\,dx.
\end{equation}

\medskip
\noindent\textit{Step 10: Finish the identity.}
{Taking $\varepsilon\to 0$ along the sequence chosen in Step~6} in \eqref{eq:test vol} and using \eqref{left 1}, \eqref{left 2}, \eqref{right 1}, and \eqref{right 2},
we obtain
\[
-n\int_{\Omega} u\, dx+\frac{\mathbf{c}^2}{2} \int_{\partial^*\Omega} (\nu_x\cdot x) \, d\mathscr H^{n-1}
=
2\int_{\Omega} u\, dx-\frac{\mathbf {c}^2}{2} \int_{\partial^*\Omega} (\nu_x\cdot x) \, d\mathscr H^{n-1}.
\]
Therefore,
\[
(n+2)\int_\Omega u\,dx=\mathbf c^2\int_{\partial^*\Omega}(\nu_x\cdot x)\,d\mathscr H^{n-1}.
\]
Finally, by the divergence theorem for sets of finite perimeter (with $\nu_x$ the outer normal),
\[
\int_{\partial^*\Omega}(\nu_x\cdot x)\,d\mathscr H^{n-1}=\int_\Omega {\rm div}(x)\,dx=n|\Omega|.
\]
This proves \eqref{volume}.
\end{proof}

\section{Proof of Theorem~\ref{main thm}}
\label{sec:proof}

By Lemma~\ref{basic property u}(3), we may assume without loss of generality that $\Omega$ is open (replacing it, if needed, by the
open representative $\mathring\Omega=\{u>0\}$).
Set
\[
\mathcal A(x):=D^2V(\nabla u(x)) \qquad \text{for a.e.\ }x\in\Omega,
\]
{recall that}
$$
{\mathcal A(x) Du= D^2V(\nabla u(x)) \nabla u = DV(\nabla u),}
$$
and define the linearized operator
\[
L_{\mathcal A}:={\rm div}(\mathcal A\nabla\cdot).
\]
Since $K$ is uniformly convex and $\partial K\in C^{2,\gamma}$, the matrix $D^2V$ is bounded and uniformly {positive-definite} on
$\mathbb S^{n-1}$; hence $\mathcal A(\cdot)$ is bounded and uniformly elliptic in $\Omega$ (defining $\mathcal A$ arbitrarily
on the negligible set $\{\nabla u=0\}$ if needed).

\subsection{Green function and harmonic measure for $L_{\mathcal A}$}

Let $\{\Omega_k\}_{k\in\mathbb N}$ be an exhausting sequence of smooth open sets with
\[
\Omega_k\subset\subset\Omega_{k+1}\subset\subset\Omega,
\qquad \bigcup_{k}\Omega_k=\Omega.
\]
Fix $x\in\Omega$. For $k$ large enough so that $x\in\Omega_k$, let $G_{x,k}$ be the unique weak solution of
\[
\left\{
\begin{array}{ll}
L_{\mathcal A} G_{x,k}=-\delta_x,& \text{in }\Omega_k,\\[2pt]
G_{x,k}=0,& \text{a.e.\ in }\mathbb R^n\setminus \Omega_k.
\end{array}
\right.
\]
By the maximum principle, $\{G_{x,k}\}$ is monotonically increasing in $k$, hence we can define
\[
G_x:=\lim_{k\to\infty}G_{x,k}\qquad\text{pointwise in }\Omega\setminus\{x\}.
\]

{
\begin{lem}\label{lem:elliptic-exhaustion}
Let $G_x$ be the Green function constructed above. Then there exists a probability measure $\omega_x$, supported on
$\partial\Omega$, such that, after extending $G_x$ by $0$ outside $\Omega$,
\[
L_{\mathcal A}G_x=\omega_x-\delta_x
\qquad\text{in }\mathscr D'(\mathbb R^n).
\]
Also, the following boundary maximum principle holds:
if $W \geq 0$ in a neighborhood of $\partial\Omega$, $W=0$ outside $\Omega$, and
$L_{\mathcal A}W=f\,\mathbf 1_\Omega\,dx+\mu$ with $f\in L^\infty$ and $\mu$ supported on $\partial\Omega$, then
$\mu\ge0$.
\end{lem}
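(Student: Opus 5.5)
We work at the level of the approximating domains $\Omega_k$ and pass to the limit $k\to\infty$.

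\emph{Step 1: the measure at level $k$.} Extend $G_{x,k}$ by zero outside $\Omega_k$. Since $\partial\Omega_k$ is smooth, the elliptic measure $\omega_{x,k}$ of $L_{\mathcal A}$ in $\Omega_k$ with pole $x$ exists. Because $G_{x,k}\ge0$ vanishes outside $\Omega_k$ and $L_{\mathcal A}G_{x,k}=-\delta_x$ inside, the distribution $\mu_k:=L_{\mathcal A}G_{x,k}+\delta_x$ is supported on $\partial\Omega_k$. Positivity of $\mu_k$ follows from a Kato-type argument. Test with $\phi\,\min(G_{x,k}/\tau,1)$ for $\phi\ge0$ and let $\tau\downarrow0$; equivalently, $\mu_k$ is the Riesz measure of the subsolution $-G_{x,k}\wedge 0$ near the boundary. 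This identifies $\mu_k=\omega_{x,k}$, i.e. the Riesz representation of $\phi\mapsto h_\phi(x)$, where $h_\phi$ is the $L_{\mathcal A}$-solution in $\Omega_k$ with boundary data $\phi$.

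The total mass is $1$. Test with $\phi\in C_c^\infty$ with $\phi\equiv1$ on a neighborhood of $\overline\Omega$:
\[
\langle L_{\mathcal A}G_{x,k},\phi\rangle=\int G_{x,k}\,L^*_{\mathcal A}\phi=0 ,
\]
since $\nabla\phi=0$ on the support of $G_{x,k}$. Hence $\mu_k(\mathbb R^n)=\delta_x(\mathbb R^n)=1$.

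\emph{Step 2: the limit $k\to\infty$.} The $\mu_k$ are probability measures supported in $\overline\Omega$, so along a subsequence $\mu_k\rightharpoonup\omega_x$, a probability measure on $\overline\Omega$. We also need $G_{x,k}\to G_x$ in $L^1_{\rm loc}$ with $\nabla G_{x,k}\to\nabla G_x$ weakly in $L^p$ for some $p<\frac{n}{n-1}$. For this I would use the Grüter--Widman bounds $G_{x,k}(y)\le C|x-y|^{2-n}$ with $C=C(n,\lambda,\Lambda)$ independent of $k$, and the uniform $W^{1,p}$ bounds on $B_R$ they imply. Then $L_{\mathcal A}G_{x,k}\to L_{\mathcal A}G_x$ in $\mathscr D'$, which gives $L_{\mathcal A}G_x=\omega_x-\delta_x$.

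The support of $\omega_x$ lies in $\partial\Omega$. For any compact set $F\subset\Omega\setminus\{x\}$, $\mu_k(F)=0$ once $\Omega_k\supset\supset F$. Hence $\omega_x$ does not charge the open set $\Omega\setminus\{x\}$. It gives no mass to $\{x\}$ because $L_{\mathcal A}G_x=-\delta_x$ near $x$, by interior convergence. This step is the main obstacle, since the mass must not escape to infinity or concentrate inside. The uniform Grüter--Widman estimates and the compact support in $\overline\Omega$ handle both, and the limit is unique by monotonicity, so the whole sequence converges.

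\emph{Step 3: the boundary maximum principle.} Let $\phi\ge0$ be supported in the neighborhood $\mathcal U$ of $\partial\Omega$ where $W\ge0$. Write $\mu=L_{\mathcal A}W-f\mathbf 1_\Omega$. We apply the same Kato-type argument: pair with $\phi\,\theta_\tau(W)$, where $\theta_\tau(s)=\min(s_+/\tau,1)$ and $\tau\downarrow0$. Since $W=0$ outside $\Omega$ and $W\ge0$ on $\mathcal U$, $W$ extends as a nonnegative function near $\partial\Omega$ whose support is $\overline{\{W>0\}}$. Then
\[
\int\phi\,d\mu=\lim_{\tau\downarrow0}\Big(-\int\mathcal A\nabla W\cdot\nabla(\phi\,\theta_\tau(W))-\int f\,\phi\,\theta_\tau(W)\Big)+\text{(terms vanishing with }\tau).
\]
The term $\mathcal A\nabla W\cdot\nabla W\,\theta_\tau'(W)\,\phi\ge0$ carries the sign. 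The remaining terms converge to
\[
-\int_{\{W>0\}}\mathcal A\nabla W\cdot\nabla\phi-\int f\phi\,\mathbf 1_{\{W>0\}} .
\]
Comparing with the definition of $\mu$ tested on $\phi$ (using $\nabla W=0$ a.e. on $\{W=0\}$) gives $\int\phi\,d\mu\ge0$. Hence $\mu\ge0$.
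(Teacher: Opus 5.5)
Your treatment of the first assertion is correct and follows the paper's route. You use boundary measures on the smooth $\partial\Omega_k$, get unit mass by testing with a function $\equiv1$ near $\overline\Omega$, pass to a weak-$*$ limit, and note there is no mass on $\Omega$ because each $\mu_k$ vanishes on any fixed compact subset of $\Omega$ once $k$ is large. For $n=2$ the bound $|x-y|^{2-n}$ must be replaced by the logarithmic one, which is still uniform in $k$.

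For the boundary maximum principle you argue directly on $\Omega$ with a Kato-type inequality, whereas the paper goes through the maximum principle on the smooth $\Omega_k$. Here Step~3 has a genuine gap. The sign argument needs, for each $\tau$,
\[
-\int\mathcal A\nabla W\cdot\nabla\bigl(\phi\,\theta_\tau(W)\bigr)\,dx-\int f\,\phi\,\theta_\tau(W)\,dx=0,
\]
that is, $\phi\,\theta_\tau(W)$ must be an admissible test function for the interior equation that $\mu$ does not see. Only then does the nonnegative term $\phi\,\theta_\tau'(W)\,\mathcal A\nabla W\cdot\nabla W$ give $-\int\theta_\tau(W)\mathcal A\nabla W\cdot\nabla\phi-\int f\phi\,\theta_\tau(W)\ge0$.

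Your display instead asserts that $\int\phi\,d\mu$ \emph{equals} the limit of the left-hand side, and this is false. Take $W=(x_n)_+$ near a flat piece $\{x_n=0\}$ of $\partial\Omega$, with $\mathcal A=I$ and $f=0$. Then $\mu=\mathscr H^{n-1}\llcorner\{x_n=0\}$ and the left-hand side is $0$ for every $\tau$. Meanwhile $\int\phi\,\theta_\tau'(W)|\nabla W|^2\to\int_{\{x_n=0\}}\phi\,d\mathscr H^{n-1}\neq0$, so this term does not vanish.

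More importantly, you never justify the identity. With $\theta_\tau(s)=\min(s_+/\tau,1)$, the function $\phi\,\theta_\tau(W)$ is not compactly supported in $\Omega$. Discarding its pairing with the signed measure $\mu$ therefore requires $W$ to vanish on $\partial\Omega$ in a sense compatible with $\mu$. The facts you invoke, ``$W=0$ a.e.\ outside $\Omega$ and $W\ge0$ near $\partial\Omega$'', do not give this on a rough boundary, and this is exactly where the boundary behaviour of $W$ must enter.

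In the application $W=Mu-G_x$ is continuous near $\partial\Omega$ and vanishes there, since $0\le G_x\le Mu$, so the argument can be closed. Take $\theta_\tau(s)=\min\{(s-\tau)_+/\tau,1\}$. Then $\phi\,\theta_\tau(W)\in W^{1,2}$ has compact support in $\Omega$, and the interior equation gives the identity above exactly. Letting $\tau\downarrow0$ and using $\nabla W=0$ a.e.\ on $\{W=0\}$ yields
\[
\int\phi\,d\mu\ \ge\ -\int_{\Omega\cap\{W=0\}}f\,\phi\,dx .
\]
Your ``comparing'' step silently drops this right-hand side. You can remove it by replacing $\phi$ with $\phi\chi_j$, where $\chi_j\equiv1$ near $\partial\Omega$ and $|\{\chi_j\neq0\}|\to0$. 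This leaves $\int\phi\,d\mu$ unchanged because $\mu$ is carried by $\partial\Omega$ and $|\partial\Omega|=0$.
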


\begin{proof}
For smooth $\Omega_k$, the zero extension of $G_{x,k}$ satisfies
$L_{\mathcal A}G_{x,k}=\omega_{x,k}-\delta_x$ in distributions, and $\omega_{x,k}$ is a probability measure on
$\partial\Omega_k$. Letting $k\to\infty$ gives the first assertion, while the boundary maximum principle follows
from the maximum principle on $\Omega_k$.
\end{proof}
}

\begin{lem}\label{greens function}
Let $\Omega$ satisfy the assumptions in Theorem~\ref{main thm} and let $u$ solve \eqref{eq:weak set finite per}.
Fix $x\in\Omega$ and let $G_x$ be the Green function constructed above. Then:
\begin{enumerate}
\item[(1)] There exist $\rho=\rho(x,\Omega)>0$ and $M=M(x,\Omega)>0$ such that
\begin{equation}\label{eq:G u}
0\le G_x \le M u \qquad \text{in }\Omega\setminus B_\rho(x).
\end{equation}
In particular, since $u$ is Lipschitz and vanishes on $\partial\Omega$ in the trace sense,
\[
G_x(y)\le C\,\dist(y,\partial\Omega)\qquad \forall\,y\in \Omega\setminus B_\rho(x).
\]

\item[(2)] Extending $G_x$ by $0$ on $\mathbb R^n\setminus\Omega$, the distribution $L_{\mathcal A}G_x$ is a Radon measure
supported on $\partial^*\Omega\cup\{x\}$. Moreover, there exists a bounded Borel function
$\alpha_x:\partial^*\Omega\to[0,\infty)$ such that
\begin{equation}\label{eq:green-measure}
L_{\mathcal A}G_x=\alpha_x\,\mathscr H^{n-1}\llcorner \partial^*\Omega-\delta_x
\qquad\text{in }\mathscr D'(\mathbb R^n),
\end{equation}
and
\begin{equation}\label{eq:alpha-mass}
\int_{\partial^*\Omega}\alpha_x\,d\mathscr H^{n-1}=1.
\end{equation}
\end{enumerate}
\end{lem}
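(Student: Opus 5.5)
The plan is a comparison argument for (1), and then a Lipschitz/Radon--Nikodym argument combined with Lemma~\ref{lem:elliptic-exhaustion} for (2).

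\emph{Key identity for (1).} The starting point is that $u$ itself solves a linear equation for $L_{\mathcal A}$. Since $\mathcal A\nabla u=D^2V(\nabla u)\nabla u=DV(\nabla u)$ by Euler's relation, we have
\[
L_{\mathcal A}u={\rm div}(DV(\nabla u))=-1\quad\text{in }\Omega,
\]
and, after zero extension, $L_{\mathcal A}u=\mathbf c H(-\nu)\mathscr H^{n-1}\llcorner\partial^*\Omega-\mathbf 1_\Omega$ on $\mathbb R^n$.

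\emph{Comparison on the exhaustion.} Fix $\rho$ with $B_{2\rho}(x)\subset\subset\Omega$. Since $u>0$ is continuous on $\overline{\Omega}$, it has a positive minimum $m_0$ on $\partial B_\rho(x)$.

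\begin{itemize}
\item The functions $G_{x,k}$ are bounded on $\partial B_\rho(x)$ uniformly in $k$. This follows from Gr\"uter--Widman-type estimates $G_{x,k}(y)\lesssim|x-y|^{2-n}$ for uniformly elliptic operators with bounded measurable coefficients. For $n=2$ one instead uses monotone convergence to the finite limit $G_x$ away from $x$ together with Harnack.
\item Set $M:=\sup_k\sup_{\partial B_\rho(x)}G_{x,k}/m_0$. On $\Omega_k\setminus B_\rho(x)$ we have $L_{\mathcal A}(Mu-G_{x,k})=-M\le 0$, with $Mu-G_{x,k}\ge0$ on $\partial B_\rho(x)$ and on $\partial\Omega_k$ (because $u\ge0$ and $G_{x,k}=0$ there).
\item The weak maximum principle for the linear operator gives $G_{x,k}\le Mu$ on $\Omega_k\setminus B_\rho(x)$. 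Letting $k\to\infty$ yields \eqref{eq:G u}.
\item The distance bound then follows from the boundary linear growth \eqref{eq:Lip bdry} established in Lemma~\ref{basic property u}.
\end{itemize}

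\emph{Structure of the measure in (2).} Lemma~\ref{lem:elliptic-exhaustion} already provides $L_{\mathcal A}G_x=\omega_x-\delta_x$ with $\omega_x$ a probability measure supported on $\partial\Omega$. It remains to show $\omega_x\le C\,\mathscr H^{n-1}\llcorner\partial^*\Omega$.

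\begin{itemize}
\item Cut off near $x$. Choose $\chi\in C_c^\infty(B_\rho(x))$ with $\chi\equiv1$ near $x$. Then $W:=MCu-(1-\chi)G_x$, with $C$ large, vanishes outside $\Omega$, is $\ge0$ near $\partial\Omega$, and satisfies
\[
L_{\mathcal A}W=f\mathbf 1_\Omega+MC\,\mathbf cH(-\nu)\mathscr H^{n-1}\llcorner\partial^*\Omega-\omega_x.
\]
Here $f\in L^\infty$, since the commutator terms from $\chi$ are supported in the interior annulus where $G_x$ is smooth enough. This needs a little care with regularity: De Giorgi--Nash gives $G_x$ H\"older and $W^{1,2}$ in the annulus, so the commutator $\mathcal A\nabla G_x\cdot\nabla\chi$ is $L^2$ rather than $L^\infty$.
\item If needed, I will relax the boundary maximum principle in Lemma~\ref{lem:elliptic-exhaustion} to $f\in L^2$ supported away from $\partial\Omega$. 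Its proof on $\Omega_k$ is unchanged.
\item The boundary maximum principle then gives $\omega_x\le MC\mathbf c\,\sup H\,\mathscr H^{n-1}\llcorner\partial^*\Omega$ as measures on $\partial\Omega$.
\item Hence $\omega_x$ is absolutely continuous with respect to $\mathscr H^{n-1}\llcorner\partial^*\Omega$, and it charges neither $\partial\Omega\setminus\partial^*\Omega$ (which is $\mathscr H^{n-1}$-null relative to this measure) nor anything else. Radon--Nikodym yields a bounded density $\alpha_x\ge0$, which is \eqref{eq:green-measure}.
\end{itemize}

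\emph{Total mass.} Identity \eqref{eq:alpha-mass} follows by testing \eqref{eq:green-measure} with $\varphi\equiv1$ on a neighborhood of $\overline\Omega$, or simply because $\omega_x$ is a probability measure.

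\emph{Main obstacle.} I expect the hardest step to be the boundary comparison, namely justifying the maximum principle on the rough boundary and handling the cutoff near the pole. I plan to do all comparisons on the smooth sets $\Omega_k$ and pass to the limit, which is exactly what Lemma~\ref{lem:elliptic-exhaustion} encodes.
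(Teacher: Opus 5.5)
Your proposal is correct and follows the paper's proof. It uses the comparison $G_{x,k}\le Mu$ on $\Omega_k\setminus B_\rho(x)$ via the weak maximum principle (using $L_{\mathcal A}u=-1$), then applies the boundary maximum principle of Lemma~\ref{lem:elliptic-exhaustion} to $Mu-G_x$ to get $0\le\omega_x\le M\mathbf c\,H(-\nu)\,\mathscr H^{n-1}\llcorner\partial^*\Omega$, followed by Radon--Nikodym and testing with $1$ for the mass. The differences are cosmetic: you justify the uniform bound of $G_{x,k}$ on $\partial B_\rho(x)$ via Gr\"uter--Widman, which the paper takes for granted, and your cutoff $(1-\chi)G_x$ is unnecessary, since the paper simply tests against $\varphi\ge0$ supported near $\partial\Omega$ and away from $B_\rho(x)$; this also makes moot your regularity worry about the commutator, which in fact contains a divergence term ${\rm div}(G_x\mathcal A\nabla\chi)$ that is in general not a function but is harmless because it lives in the interior.
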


\begin{proof}
\emph{Step 1: Comparison with $u$.}
Choose $\rho=\rho(x,\Omega)>0$ so that $\overline{B_\rho(x)}\subset\subset\Omega$.
By the strong maximum principle, $u>0$ in $\Omega$, hence $u$ has a positive minimum on $\partial B_\rho(x)$.
Since $G_{x,k}$ is smooth in $\Omega_k\setminus\{x\}$ and $G_{x,k}\to 0$ on $\partial\Omega_k$, we may choose
$M=M(x,\Omega)>0$ so large that for all $k$ large enough,
\[
G_{x,k}<Mu \quad \text{on }\partial B_\rho(x)\quad\text{and}\quad G_{x,k}\le Mu \quad\text{on }\partial\Omega_k.
\]
Inside $\Omega_k\setminus B_\rho(x)$ we have
\[
L_{\mathcal A}(Mu-G_{x,k})=M L_{\mathcal A}u - L_{\mathcal A}G_{x,k}=M(-1)-0<0,
\]
so by the maximum principle $G_{x,k}\le Mu$ in $\Omega_k\setminus B_\rho(x)$.
Letting $k\to\infty$ gives \eqref{eq:G u}.

\smallskip
\emph{Step 2: Measure representation.}
{Extend $G_x$ by $0$ on $\mathbb R^n\setminus\Omega$. By Lemma~\ref{lem:elliptic-exhaustion}, the
boundary measures $\omega_{x,k}$ associated with the smooth exhaustion converge weakly-* to a probability measure
$\omega_x$ and}
\[
{
L_{\mathcal A}G_x=\omega_x-\delta_x
\qquad\text{in }\mathscr D'(\mathbb R^n),
}
\]
{where $\omega_x$ is a probability measure supported on $\partial\Omega$. It remains to show that this boundary
measure is carried by $\partial^*\Omega$.}

{Let $\varphi\ge0$ be supported in a small neighborhood of $\partial\Omega$ disjoint from $B_\rho(x)$, and set
$w=Mu-G_x$. By Step~1, $w\ge0$ near $\partial\Omega$, $w=0$ on $\mathbb R^n\setminus\Omega$, and
$L_{\mathcal A}w=-M$ in $\Omega\setminus B_\rho(x)$. Applying 
Lemma~\ref{lem:elliptic-exhaustion} to $w$ gives}
\[
{
0\le \left\langle L_{\mathcal A}w+M\mathbf 1_\Omega\,dx,\varphi\right\rangle
=M\mathbf c\int_{\partial^*\Omega}\varphi\,H(-\nu)\,d\mathscr H^{n-1}
-\int \varphi\,d\omega_x .
}
\]
{Hence, on $\mathbb R^n\setminus B_\rho(x)$,}
\[
{
0\le \omega_x\le M\mathbf c\,H(-\nu)\,\mathscr H^{n-1}\llcorner\partial^*\Omega .
}
\]
{Since $\rho$ is arbitrary, the boundary measure $\omega_x$ is carried by
$\partial^*\Omega$ and has a bounded density. Thus
$\omega_x=\alpha_x\,\mathscr H^{n-1}\llcorner\partial^*\Omega$ for some bounded $\alpha_x\ge0$, which is
\eqref{eq:green-measure}.}

Finally, testing \eqref{eq:green-measure} against the constant function $1$ gives
\[
0=\langle L_{\mathcal A}G_x,1\rangle=\int_{\partial^*\Omega}\alpha_x\,d\mathscr H^{n-1}-1,
\]
which proves \eqref{eq:alpha-mass}.
\end{proof}

\begin{rem}\label{rem:repr}
{Fix $x\in\Omega$ and let $\omega_x:=\alpha_x\,\mathscr H^{n-1}\llcorner\partial^*\Omega$ be the boundary
measure in Lemma~\ref{greens function}. We shall use the trace form of the elliptic-measure representation only for
the directional derivatives appearing below. Namely, if $e\in K$ and $v_e:=\partial_e u$, then
$L_{\mathcal A}v_e=0$ in $\Omega$, $v_e$ is bounded, and Lemma~\ref{basic property u}(4) gives an approximate interior
trace $v_e^*$ for $\mathscr H^{n-1}$-a.e.\ point of $\partial^*\Omega$. Hence, applying \cite[Lemma~3.3]{FZ2025}
to the operator $L_{\mathcal A}$,}
\[
{
v_e(x)=\int_{\partial^*\Omega} v_e^*(y)\,d\omega_x(y)
=\int_{\partial^*\Omega} v_e^*(y)\,\alpha_x(y)\,d\mathscr H^{n-1}(y).
}
\]
\end{rem}

\subsection{A maximum principle for $H(\nabla u)$}

\begin{prop}\label{max nabla u}
Let $\Omega$ be an open bounded set satisfying the assumptions in Theorem~\ref{main thm}, and let $u$ solve
\eqref{eq:weak set finite per}. Then
\[
{\operatorname*{ess\,sup}_{\Omega} H(\nabla u)\le \mathbf{c}.}
\]
\end{prop}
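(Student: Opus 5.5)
The plan is to combine the elliptic-measure representation of Remark~\ref{rem:repr} with the support-function duality between $H$ and $H_*$. For every $\xi\in\mathbb R^n$ we have
\[
H(\xi)=\sup\{\xi\cdot e:\ H_*(e)\le 1\}=\sup_{e\in \overline K}\xi\cdot e ,
\]
since $H_*$ is the dual of $H$ and, $K$ being convex and bounded, the bidual equals $H$. So it suffices to show that, for every fixed $e\in K$ and every $x\in\Omega$ (away from a null set), $\partial_e u(x)\le \mathbf c$. Taking the supremum over a countable dense subset of $K$ then gives $H(\nabla u(x))\le\mathbf c$ for a.e.\ $x$.

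Fix $e\in K$ and set $v_e:=\partial_e u$. Differentiating the equation $\Delta_H u=-1$ in $\Omega$ (using $u\in W^{2,2}_{\rm loc}(\Omega)$ and the difference-quotient argument) shows that $L_{\mathcal A}v_e=0$ in $\Omega$, where $\mathcal A=D^2V(\nabla u)$. By Remark~\ref{rem:repr}, for $x\in\Omega$,
\[
v_e(x)=\int_{\partial^*\Omega} v_e^*(y)\,\alpha_x(y)\,d\mathscr H^{n-1}(y).
\]
By Lemma~\ref{basic property u}(4), the approximate interior trace of $\nabla u$ at $y\in\partial^*\Omega$ is $-\mathbf a(y)\nu_y$ with $\mathbf a(y)=\mathbf c/H(-\nu_y)$. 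Hence
\[
v_e^*(y)=\mathbf c\,\frac{(-\nu_y)\cdot e}{H(-\nu_y)}\le \mathbf c\, H_*(e)\le\mathbf c ,
\]
where the middle inequality is the dual Cauchy--Schwarz inequality $\xi\cdot e\le H(\xi)H_*(e)$ and the last uses $e\in K$. Since $\alpha_x\ge0$ with total mass one by \eqref{eq:alpha-mass}, we conclude $v_e(x)\le\mathbf c$.

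The main obstacle is justifying the representation formula, that is, passing from the boundary behaviour of the bounded $L_{\mathcal A}$-solution $v_e$ to the integral against $\omega_x$. The difficulty is that $\mathcal A$ is merely measurable and $v_e$ is only known to have a measure-theoretic trace, not a continuous one. The paper delegates this step to Remark~\ref{rem:repr} via \cite[Lemma~3.3]{FZ2025}. Concretely, I would test $L_{\mathcal A}G_x=\alpha_x\mathscr H^{n-1}\llcorner\partial^*\Omega-\delta_x$ against $v_e$ cut off near $\partial\Omega$. The bound $G_x\le C\dist(\cdot,\partial\Omega)$ from Lemma~\ref{greens function}(1), the Minkowski-content bound, and the convergence in measure of rescaled gradients should make the boundary-layer error vanish. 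A minor additional point is that $\nabla u$ may vanish on a set where $\mathcal A$ is defined arbitrarily; this does not affect the argument, since $\{\nabla u=0\}$ only contributes where $v_e=0\le\mathbf c$ anyway.
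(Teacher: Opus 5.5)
Your argument is exactly the paper's proof: differentiate $\Delta_H u=-1$ to get $L_{\mathcal A}\partial_e u=0$, represent $\partial_e u(x)$ against the probability measure $\alpha_x\,\mathscr H^{n-1}\llcorner\partial^*\Omega$ via Remark~\ref{rem:repr}, bound the trace $-\mathbf a(y)\,\nu_y\cdot e$ by $\mathbf a(y)H(-\nu_y)H_*(e)\le\mathbf c$, and take the supremum over $e\in K$ (your countable dense subset only makes the null-set bookkeeping explicit). As in the paper, the representation formula is imported from Remark~\ref{rem:repr} (i.e.\ \cite[Lemma~3.3]{FZ2025}), so your sketch for proving it is extra; note, however, that with a distance cutoff $\chi_\delta$ the ingredients you list only give $\bigl|\int_\Omega G_x\,\mathcal A\nabla\partial_e u\cdot\nabla\chi_\delta\,dx\bigr|\lesssim\int_{\{\delta<\dist(\cdot,\partial\Omega)<2\delta\}}|D^2u|\,dx=O(1)$, so you would also need boundary decay of $D^2u$ as in Lemma~\ref{basic property u}(5), or a cutoff built from level sets of $G_x$, which makes this term vanish identically.
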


\begin{proof}
Fix $e\in K$ (equivalently, $H_*(e)\le 1$). Since $u\in W^{2,2}_{\loc}(\Omega)$ and
${\rm div}(DV(\nabla u))=-1$ in $\Omega$, we may differentiate the equation in distributions as follows.
Take $\psi\in C_c^2(\Omega)$ and test \eqref{eq:weak set finite per} with $\varphi=\partial_e\psi$; since
$\int_\Omega \partial_e\psi\,dx=0$, we obtain
\[
0=\int_\Omega DV(\nabla u)\cdot \nabla(\partial_e\psi)\,dx
= -\int_\Omega \partial_e\!\bigl(DV(\nabla u)\bigr)\cdot \nabla\psi\,dx.
\]
Using $\partial_e(DV(\nabla u))=D^2V(\nabla u)\,\nabla(\partial_e u)=\mathcal A\,\nabla(\partial_e u)$ a.e., we conclude that
$v_e:=\partial_e u$ is a weak solution of
\[
L_{\mathcal A} v_e = 0\qquad \text{in }\Omega.
\]
Fix $x\in\Omega$ and let $\alpha_x$ be given by Lemma~\ref{greens function}(2).
Applying Remark~\ref{rem:repr} to the $L_{\mathcal A}$-harmonic function $v_e=\partial_e u$, we obtain
\begin{equation}\label{eq:ve-rep}
v_e(x)=\int_{\partial^*\Omega} v_e^*(y)\,\alpha_x(y)\,d\mathscr H^{n-1}(y),
\end{equation}
where $v_e^*$ denotes the approximate boundary trace on $\partial^*\Omega$.
By Lemma~\ref{basic property u}(4), for $\mathscr H^{n-1}$-a.e.\ $y\in\partial^*\Omega$ the interior gradient has the approximate
limit $\nabla u(y)=-\mathbf a(y)\nu_y$, hence
\[
v_e^*(y)=\nabla u(y)\cdot e=-\mathbf a(y)\,\nu_y\cdot e.
\]
Since $e\in K$, we have $(-\nu_y)\cdot e\le \sup_{z\in K}(-\nu_y)\cdot z = H(-\nu_y)$, and Lemma~\ref{basic property u}(4) gives
$\mathbf a(y)H(-\nu_y)=\mathbf c$. Hence
\[
v_e^*(y)\le \mathbf a(y)\,H(-\nu_y)=\mathbf c\qquad\text{for $\mathscr H^{n-1}$-a.e.\ }y\in\partial^*\Omega.
\]
Plugging this into \eqref{eq:ve-rep} and using \eqref{eq:alpha-mass} yields
\[
v_e(x)\le \mathbf c\int_{\partial^*\Omega}\alpha_x\,d\mathscr H^{n-1}=\mathbf c.
\]
Since $x\in\Omega$ and $e\in K$ were arbitrary, we conclude
\[
{\nabla u(x)\cdot e\le \mathbf c\quad \forall\,e\in K\quad\text{for a.e. }x\in\Omega,}
\]
and taking the supremum over $e\in K$ gives {$H(\nabla u)\le \mathbf c$ a.e.\ in $\Omega$}.
\end{proof}

\subsection{Conclusion via the $P$-function}

\begin{proof}[Proof of Theorem~\ref{main thm}]
By Lemma~\ref{basic property u}(3), we may assume that $\Omega$ is open. Define the $P$-function
\[
P(x)=H(\nabla u(x))^2+\frac{2}{n}u(x)=2V(\nabla u(x))+\frac{2}{n}u(x)\qquad\text{for a.e.\ }x\in\Omega.
\]

\smallskip
\noindent\emph{Step 1: Subharmonicity of $P$.}
{Since $u\in W^{2,2}_{\loc}(\Omega)$ and}
$$
{L_{\mathcal A}u={\rm div}(\mathcal A\nabla u)={\rm div}(DV(\nabla u))=-1 
\quad \text{in } \ \Omega,}
$$  
{by differentiating the equation above and applying the $0$-homogeneity of $D^2V$,  we obtain the following Weinberger-type identity}
\begin{equation}\label{eq:LA-P}
L_{\mathcal A}\bigl(V(\nabla u)\bigr)=\bigl|\,\mathcal A^{1/2}D^2u\,\mathcal A^{1/2}\bigr|^2
\qquad\text{a.e.\ in }\Omega,
\end{equation}
hence
\[
L_{\mathcal A}P
=2\bigl|\,\mathcal A^{1/2}D^2u\,\mathcal A^{1/2}\bigr|^2-\frac{2}{n}.
\]
At each point, set $B:=\mathcal A^{1/2}D^2u\,\mathcal A^{1/2}$, which is symmetric. Then
\[
{\rm tr}(B)={\rm tr}(\mathcal A D^2u)=\Delta_H u=-1,
\]
and by Cauchy--Schwarz,
\[
|B|^2 \ge \frac{1}{n}\,{\rm tr}(B)^2=\frac{1}{n}.
\]
Therefore $L_{\mathcal A}P\ge 0$ in $\Omega$ in the weak sense.

\smallskip
\noindent\emph{Step 2: Maximum principle and sharp bound for $P$.}
Fix $\eta>0$ and consider the superlevel set $\{u\ge \eta\}\subset\subset \Omega$. Since $P$ is $L_{\mathcal A}$-subharmonic,
the weak maximum principle yields
\[
\esssup_{\{u\ge \eta\}}P=\esssup_{\partial\{u\ge \eta\}}P.
\]
On $\partial\{u\ge \eta\}$ we have $u=\eta$, and by Proposition~\ref{max nabla u},
$H(\nabla u)\le \mathbf c$. Hence
\[
P \le \mathbf c^2+\frac{2}{n}\eta \qquad\text{on }\partial\{u\ge \eta\}.
\]
Letting $\eta\downarrow 0$ gives
\begin{equation}\label{eq:P-bound}
P\le \mathbf c^2 \qquad\text{a.e.\ in }\Omega.
\end{equation}

\smallskip
\noindent\emph{Step 3: Rigidity from the volume identity.}
Using ${\rm div}(DV(\nabla u))=-1$ in $\Omega$ and $u\in W^{1,2}_0(\Omega)$, we have
\[
\int_\Omega u\,dx=\int_\Omega DV(\nabla u)\cdot \nabla u\,dx
{= \int_\Omega 2V(\nabla u)\,dx,}
\]
{where the $2$-homogeneity of $V$ was applied in the second identity.} 
Therefore,
\[
\int_\Omega P\,dx
=\int_\Omega\left(2V(\nabla u)+\frac{2}{n}u\right)\,dx
=\left(1+\frac{2}{n}\right)\int_\Omega u\,dx
=\frac{n+2}{n}\int_\Omega u\,dx.
\]
By Lemma~\ref{lem:volume}, $\frac{n+2}{n}\int_\Omega u\,dx=\mathbf c^2|\Omega|$, hence
\[
\int_\Omega P\,dx=\mathbf c^2|\Omega|.
\]
Together with \eqref{eq:P-bound} this forces
\[
P\equiv \mathbf c^2 \qquad\text{a.e.\ in }\Omega,
\]
and consequently $L_{\mathcal A}P\equiv 0$. In particular, equality holds in the Cauchy--Schwarz step, so
$B=\mathcal A^{1/2}D^2u\,\mathcal A^{1/2}$ must be a scalar multiple of the identity matrix, namely
\[
\mathcal A^{1/2}D^2u\,\mathcal A^{1/2}=-\frac{1}{n}I,
\]
or equivalently,
\[
D(DV(\nabla u))=\mathcal A D^2u=-\frac{1}{n}I \qquad\text{a.e.\ in }\Omega.
\]
{The identity above implies, exactly as in
\cite[(5.17)--(5.19)]{CS2009}, that on each connected component $\Omega_i$ of the open representative
$\Omega^+=\{u>0\}$ there is a point $x_i$ such that}
\[
{
DV(\nabla u(x))=-\frac{x-x_i}{n}\qquad\text{for a.e.\ }x\in\Omega_i .
}
\]
{Since $V_*(x):=\frac12 H_*^2(x)$ is the Legendre dual of $V$, this is equivalent to}
\[
{
\nabla u(x)=\frac1n\,DV_*\bigl(-(x-x_i)\bigr),
}
\]
{and therefore, on $\Omega_i$,}
\[
{
u(x)=\frac{r_i^2-H_*^2\bigl(-(x-x_i)\bigr)}{2n}
}
\]
{for some $r_i>0$. Hence each component is $x_i-r_iK$. 
Since $\Omega$ is indecomposable this proves that, up to a translation, $\Omega=-rK$ and}
\[
{
u(x)=\frac{r^2-H_*^2(-x)}{2n}
}
\]
{for some $r>0$.}
\end{proof}

\appendix
\section{A covering lemma under ADR with a $1/|\log r|$ gain at selected scales}

Let $\Omega\subset\mathbb R^n$ be a bounded set of finite perimeter, with reduced boundary $\partial^*\Omega$.
For $r>0$ and a set $E\subset\mathbb R^n$, we write
\[
\Ncal_r(E):=\{z\in\mathbb R^n:\ \dist(z,E)\le r\}.
\]
We also denote by $\omega_{n-1}$ the $(n-1)$-dimensional Hausdorff measure of the unit ball in a linear hyperplane,
i.e.\ $\omega_{n-1}=\mathscr H^{n-1}(B_1\cap P)$ for any linear hyperplane $P$.

For $x\in\partial^*\Omega$ and $r>0$ define the $\beta$-number
\begin{equation}\label{def:beta}
  \beta(x,r):=\inf_{P}\; r^{1-n}\!\!\int_{\partial^*\Omega\cap B(x,r)} \frac{\dist(z,P)}{r}\,d\mathscr H^{n-1}(z),
\end{equation}
where the infimum runs over all affine hyperplanes $P\subset\mathbb R^n$.

Assume the global $\beta$-square bound
\begin{equation}\label{ass:A1}
  \int_{\partial^*\Omega}\int_0^1 \beta(x,s)^2\,\frac{ds}{s}\,d\mathscr H^{n-1}(x)\ \le\ A_1<\infty.
\end{equation}

\begin{defn}[Ahlfors--David regularity (ADR)]\label{def:ADR}
We say that $\partial^*\Omega$ is \emph{Ahlfors--David regular} if there exist $0<r_{\rm ADR}\le 1$ and constants
$0<c_{\rm ADR}\le C_{\rm ADR}<\infty$ such that
\begin{equation}\label{eq:ADR}
  c_{\rm ADR}\,r^{n-1}\ \le\ \mathscr H^{n-1}(\partial^*\Omega\cap B(x,r))\ \le\ C_{\rm ADR}\,r^{n-1}
  \quad \forall\,x\in\partial^*\Omega,\ \ 0<r<r_{\rm ADR}.
\end{equation}
\end{defn}

\begin{prop}[ADR covering with density control and $1/|\log r|$ gain on $\beta$-bad]\label{prop:ADR}
Fix $\eta\in(0,1)$ and let $\theta:(0,1)\to(0,1)$ be any function with $\theta(\eta)\downarrow 0$ as $\eta\downarrow 0$.
Then there exist $r_0=r_0(\Omega,\eta,{\rm ADR})\in(0,r_{\rm ADR})$ and a Borel set $F_\eta\subset\partial^*\Omega$ such that, for all $x \in F_\eta$ and $0<s\leq r_0$,
\begin{equation}\label{eq:Egorov}
  \mathscr H^{n-1}(\partial^*\Omega\setminus F_\eta)\ \le\ \theta(\eta)\,\eta^{\,n-1},\qquad
  \frac{\mathscr H^{n-1}(\partial^*\Omega\cap B(x,s))}{\omega_{n-1}s^{n-1}}\in\Big(1-\tfrac{\eta}{10},\,1+\tfrac{\eta}{10}\Big).
\end{equation}
{Moreover, let $G_\eta\subset\partial^*\Omega$ be any Borel set satisfying}
\[
{
\mathscr H^{n-1}(\partial^*\Omega\setminus G_\eta)\le \theta(\eta)\,\eta^{n-1}.
}
\]
{Then there exists a sequence $r_j\downarrow0$ such that, for each $j$, there is a Besicovitch covering}
\[
{
\mathcal B_{r_j}=\mathcal G_{r_j}\cup\mathcal R_{r_j}
}
\]
{of $\Ncal_{r_j/2}(\partial\Omega)$ by balls $B(x_h,r_j)$ with centers in $\partial^*\Omega$ and bounded overlap, with the following properties:}

\begin{enumerate}
\item\label{item:gooddensity}
{If $B(x_h,r_j)\in\mathcal G_{r_j}$, then $x_h\in F_\eta\cap G_\eta$, $\beta(x_h,r_j)\le\eta$, and}
\[
{
  \frac{\mathscr H^{n-1}(\partial^*\Omega\cap B(x_h,r_j))}{\omega_{n-1}r_j^{n-1}}\in(1-\eta,\,1+\eta).
}
\]

\item\label{item:badcount}
{The bad subfamily satisfies}
\[
{
  \#\mathcal R_{r_j}\ \le\ C(n,{\rm ADR})\,r_j^{-(n-1)}
  \left(\frac{\eta^{-2}A_1}{|\log r_j|}+\theta(\eta)\right).
}
\]
\end{enumerate}
\end{prop}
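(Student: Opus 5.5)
The first part, the construction of $F_\eta$, I would get from Egorov's theorem. By De Giorgi's structure theorem, for $\mathscr H^{n-1}$-a.e.\ $x\in\partial^*\Omega$ the density ratio $\mathscr H^{n-1}(\partial^*\Omega\cap B(x,s))/(\omega_{n-1}s^{n-1})$ tends to $1$ as $s\downarrow0$. The ratio is continuous in $s$ except at countably many jumps, so it suffices to take the supremum of its deviation from $1$ over rational $s<r$; this is a Borel function of $x$. Since $\mathscr H^{n-1}\llcorner\partial^*\Omega$ is finite, Egorov's theorem gives a Borel set $F_\eta$ with $\mathscr H^{n-1}(\partial^*\Omega\setminus F_\eta)\le\theta(\eta)\eta^{n-1}/2$ on which the convergence is uniform. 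We then pick $r_0<r_{\rm ADR}$ so that the deviation is $<\eta/10$ for all $s\le r_0$, which is \eqref{eq:Egorov}. Shrinking the exceptional set by a factor of $2$ costs nothing, and the same budget will also absorb $G_\eta$ below.

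To select the scales, set $g(x,s):=\mathbf 1_{\{\beta(x,s)>\eta\}}$. By Chebyshev and \eqref{ass:A1},
\[
\int_0^{1}\mathscr H^{n-1}\big(\{x\in\partial^*\Omega:\beta(x,s)>\eta\}\big)\frac{ds}{s}\le \eta^{-2}A_1 .
\]
For each dyadic block $[2^{-2k},2^{-k}]$ the $ds/s$-measure is $k\log 2\sim|\log s|$. Hence there is $s_k$ in that block with $\mathscr H^{n-1}(\{\beta(\cdot,s_k)>\eta\})\le C\eta^{-2}A_1/|\log s_k|$. To transfer this to the centers of a covering at scale $r_j$, I need comparability of $\beta$ at nearby centers and scales: if $|x-x'|\le r$ then $\beta(x',r)\le 2^{n}\beta(x,2r)$, since $B(x',r)\subset B(x,2r)$. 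So I would work with $\beta(\cdot,2r_j)$ in the selection, i.e.\ choose $s_k=2r_j$, and take the threshold $\eta/2^n$. This only changes constants in $\eta^{-2}$.

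Next, for each $j$ I would take a maximal $r_j/2$-separated family of points in $\partial^*\Omega$, with $r_j\le r_0$. The balls $B(x_h,r_j)$ then cover $\Ncal_{r_j/2}(\partial\Omega)$, because $\partial\Omega=\overline{\partial^*\Omega}$, and they have bounded overlap with count $\#\mathcal B_{r_j}\lesssim r_j^{1-n}$ by ADR. Points in $F_\eta\cap G_\eta$ are not automatically centers, so each center $x_h$ is replaced as follows. If $B(x_h,r_j/4)$ contains a point $x'\in F_\eta\cap G_\eta$ with $\beta(x',2r_j)\le\eta 2^{-n}$, we move the center to $x'$. Keeping all radii equal to $r_j$ preserves the covering property, since we enlarge to radius $r_j$ around $x'$ and $B(x_h,r_j/2)\subset B(x',r_j)$, and it keeps bounded overlap. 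The moved balls form $\mathcal G_{r_j}$. For these, item (1) follows: $\beta(x',r_j)\le 2^{n-1}\beta(x',2r_j)\le\eta$, and the density bound holds because $x'\in F_\eta$ and $r_j\le r_0$.

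All remaining balls go into $\mathcal R_{r_j}$. For each of them, the set $B(x_h,r_j/4)\cap\partial^*\Omega$, which has measure $\ge c_{\rm ADR}(r_j/4)^{n-1}$ by the lower ADR bound, is contained in
\[
(\partial^*\Omega\setminus(F_\eta\cap G_\eta))\cup\{\beta(\cdot,2r_j)>\eta2^{-n}\}.
\]
Summing with bounded overlap of the quarter balls gives
\[
\#\mathcal R_{r_j}\,c\,r_j^{n-1}\le 2\theta(\eta)\eta^{n-1}+C\eta^{-2}A_1/|\log r_j|.
\]
This yields item (2), with $\eta^{n-1}\le1$ absorbed.

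The main obstacle is the scale selection: it must be uniform over all centers at a single scale, with the $1/|\log r|$ gain coming only from averaging over a dyadic-logarithmic window. The center-shift/comparability trick is what makes a single pigeonholed scale work, so I would check carefully that the shift radius $r_j/4$, the use of $2r_j$, and the lower ADR bound are mutually consistent.
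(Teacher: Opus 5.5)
Your proof has the right architecture, and it takes a genuinely different and somewhat leaner route than the paper's. One step, the covering after you shift centers, fails as written. It is fixed by changing constants.

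\textbf{Comparison with the paper.} The paper turns the square-function bound into a count of bad centers. For any $s/5$-separated family, it bounds the number $N_\beta(s)$ of centers with $\beta(x_h,s)>\eta$ by $C s^{1-n}\eta^{-2}\int_{\partial^*\Omega}\beta(z,2s)^2\,d\mathscr H^{n-1}(z)$. This uses the cross-center comparison $\beta(x_h,s)\le 2^n\beta(z,2s)$ for $z\in B(x_h,s/10)$ together with lower ADR. It selects scales by a two-stage pigeonhole: first a dyadic block with $a_k\le 2A_1/K$, then a scale inside that block. It fixes the centers in advance through a two-layer net: a maximal $r/5$-separated set $Y_r\subset F_\eta\cap G_\eta$, completed by a maximal separated set $W_r$ of points at distance $>r/2$ from $Y_r$. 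The $\beta$-bad part of $Y_r$ is then counted by $N_\beta$, and $W_r$ is counted by $\mathscr H^{n-1}(\partial^*\Omega\setminus(F_\eta\cap G_\eta))$.

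You instead control the measure of the superlevel sets, select the scale by one average over $[2^{-2k},2^{-k}]$, and choose centers adaptively. This puts both kinds of badness into a single exceptional set handled by a single lower-ADR count. Because goodness is tested at the new center itself, you need no comparability at all. You can run the selection directly for $\{\beta(\cdot,r_j)>\eta\}$ and shift to some $x'\in F_\eta\cap G_\eta$ with $\beta(x',r_j)\le\eta$. If you keep the doubled scale, note that the same-center comparison is $\beta(x',r)\le 2^n\beta(x',2r)$, not $2^{n-1}$. With your threshold $\eta 2^{-n}$ this still gives $\beta(x',r_j)\le\eta$.

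\textbf{The covering step.} With a maximal $r_j/2$-separated net, a point of $\Ncal_{r_j/2}(\partial\Omega)$ can be at distance close to $r_j$ from its nearest original center $x_h$. It can then be up to about $\tfrac54 r_j$ from the shifted center $x'$. The inclusion $B(x_h,r_j/2)\subset B(x',r_j)$ that you cite only shows that the shifted balls cover $\partial^*\Omega$ and its $r_j/4$-neighbourhood, not $\Ncal_{r_j/2}(\partial\Omega)$.

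The repair is to take the net maximal $\sigma r_j$-separated and allow shifts within $B(x_h,\tau r_j)$ with $\sigma+\tau<\tfrac12$; for example $\sigma=\tfrac15$, $\tau=\tfrac14$. Then $\tfrac12+\sigma+\tau<1$ gives the covering. The shifted balls keep bounded overlap because the original centers are $\sigma r_j$-separated. The balls $B(x_h,\tau r_j)$ now have bounded rather than zero overlap, which is all your estimate $\#\mathcal R_{r_j}\,c\,r_j^{n-1}\lesssim \theta(\eta)\eta^{n-1}+\eta^{-2}A_1/|\log r_j|$ requires. With this change, and after discarding finitely many $r_j$ so that $r_j\le r_0$ and $\tau r_j<r_{\rm ADR}$, your argument proves the proposition.
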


\begin{proof}[Proof of Proposition~\ref{prop:ADR}]
\noindent\emph{Step 1: An Egorov set for density.}
For $x\in\partial^*\Omega$ set
\[
f_r(x):=\frac{\mathscr H^{n-1}(\partial^*\Omega\cap B(x,r))}{\omega_{n-1}r^{n-1}}.
\]
By De Giorgi's structure theorem \cite[Theorem~15.9]{M2012}, $f_s(x)\to 1$ as $s\downarrow 0$ for
$\mathscr H^{n-1}$-a.e.\ $x\in\partial^*\Omega$.
Fix $\eta\in(0,1)$ and define
\[
A_m:=\left\{x\in\partial^*\Omega:\ \sup_{0<s\le 1/m}|f_s(x)-1|\le \tfrac{\eta}{10}\right\},\qquad m\in\mathbb N.
\]
Then $A_m\uparrow \partial^*\Omega$ up to a null set, hence
$\mathscr H^{n-1}(\partial^*\Omega\setminus A_m)\downarrow 0$ as $m\to\infty$.
Choose $m$ so large that
\[
\mathscr H^{n-1}(\partial^*\Omega\setminus A_m)\ \le\ \theta(\eta)\,\eta^{\,n-1}.
\]
Set $F_\eta:=A_m$ and $r_0:=1/m$. This gives \eqref{eq:Egorov}.

\smallskip
\noindent\emph{Step 2: Pointwise control of the number of $\beta$-bad centers.}
Fix $s\in(0,r_0)$ and let $\{x_h\}\subset\partial^*\Omega$ be any $s/5$-separated set.
Write $S_s:=\{h:\ \beta(x_h,s)>\eta\}$.
If $z\in B(x_h,s/10)$ then $B(x_h,s)\subset B(z,2s)$, and therefore
\begin{equation}\label{eq:beta-compare}
  \beta(x_h,s)\ \le\ 2^{n}\,\beta(z,2s).
\end{equation}
The balls $\{B(x_h,s/10)\}_{h\in S_s}$ are disjoint; by the lower ADR bound,
\[
\mathscr H^{n-1}(\partial^*\Omega\cap B(x_h,s/10))\ge c_{\rm ADR}(s/10)^{n-1}.
\]
Hence,
\[
\begin{aligned}
\eta^2\,\#S_s
&\le \sum_{h\in S_s}\beta(x_h,s)^2\\
&\le \sum_{h\in S_s}\frac{2^{2n}}{\mathscr H^{n-1}(\partial^*\Omega\cap B(x_h,s/10))}
   \int_{\partial^*\Omega\cap B(x_h,s/10)}\beta(z,2s)^2\,d\mathscr H^{n-1}(z)\\
&\le C(n,{\rm ADR})\,s^{-(n-1)}\int_{\partial^*\Omega}\beta(z,2s)^2\,d\mathscr H^{n-1}(z).
\end{aligned}
\]
Taking the supremum over all $s/5$-separated families, this gives
\begin{equation}\label{eq:Nbeta-point}
  N_\beta(s)\ \le\ C(n,{\rm ADR})\,s^{-(n-1)}\,\eta^{-2}\!\int_{\partial^*\Omega}\beta(z,2s)^2\,d\mathscr H^{n-1}(z),
\end{equation}
where $N_\beta(s)$ denotes that supremum.

\smallskip
\noindent\emph{Step 3: Selecting scales with a $(1/|\log r|)$-gain.}
For $k\ge 1$, let $I_k=[2^{-k-1},2^{-k}]$. Integrating \eqref{eq:Nbeta-point} on $I_k$ and changing variables $t=2s$, we get
\[
  \int_{I_k} N_\beta(s)\,\frac{ds}{s}
  \ \le\ C(n,{\rm ADR})\,2^{(n-1)k}\,\eta^{-2}\int_{\partial^*\Omega}\int_{2^{-k}}^{2^{-k+1}}\beta(z,t)^2\,\frac{dt}{t}\,d\mathscr H^{n-1}(z).
\]
Define
\[
a_k:=\int_{\partial^*\Omega}\int_{2^{-k}}^{2^{-k+1}}\beta(z,t)^2\,\frac{dt}{t}\,d\mathscr H^{n-1}(z),
\]
so that $\sum_{k\ge1}a_k\le A_1$ by \eqref{ass:A1}.
Given $K\in\mathbb N$ large, choose $k\in\{K/2,\dots,K\}$ such that $a_k\le 2A_1/K$ (by averaging).
Then, by averaging $N_\beta(s)$ over $I_k$ with respect to $ds/s$, there exists $s_k\in I_k$ such that
\[
  N_\beta(s_k)\ \le\ \frac{1}{\log 2}\int_{I_k} N_\beta(s)\,\frac{ds}{s}
  \ \le\ C(n,{\rm ADR})\,2^{(n-1)k}\,\frac{\eta^{-2}A_1}{K}.
\]
Since $s_k\in[2^{-k-1},2^{-k}]$, one has $2^{(n-1)k}\le C\,s_k^{-(n-1)}$ and also $K\sim k\sim |\log s_k|$, hence\footnote{		{This is the elementary scale-selection principle behind the logarithmic gain: on each long dyadic block,
		at least one scale is controlled by the average of the square function on that block.}}
\[
  N_\beta(s_k)\ \le\ C(n,{\rm ADR})\,s_k^{-(n-1)}\,\frac{\eta^{-2}A_1}{|\log s_k|}.
		\]
{Choosing a sequence $K\to\infty$ gives $r_j:=s_k\downarrow0$ with the estimate above at $s=r_j$.}

\smallskip
\noindent\emph{{Step 4: Building the cover and counting the bad subfamily.}}
{Fix one of the selected scales $r=r_j$ and set $F:=F_\eta\cap G_\eta$. Let $Y_r\subset F$ be a maximal $r/5$-separated set. Let}
\[
{
Z_r:=\{x\in\partial^*\Omega:\ \dist(x,Y_r)>r/2\},
}
\]
{and let $W_r\subset Z_r$ be a maximal $r/5$-separated set. Then $Y_r\cup W_r$ is $r/5$-separated and every point of
$\partial^*\Omega$ lies within distance $r/2$ of $Y_r\cup W_r$. Hence the balls $B(x_h,r)$ with
$x_h\in Y_r\cup W_r$ cover $\Ncal_{r/2}(\partial\Omega)$ and have bounded overlap.}

{Declare a ball with center $x_h\in Y_r$ to be good if $\beta(x_h,r)\le\eta$, and put all remaining balls in
$\mathcal R_r$. The good balls satisfy (\ref{item:gooddensity}) by \eqref{eq:Egorov}. It remains to count the bad balls.

The $\beta$-bad centers in $Y_r$ are controlled by the selected-scale estimate from Step~3. If $w\in W_r$, then
$\dist(w,F)>3r/10$; otherwise a point of $F$ within $3r/10$ of $w$ would be within $r/5$ of some point of $Y_r$, contradicting
$w\in Z_r$. Thus}
\[
{
B(w,r/10)\cap\partial^*\Omega\subset \partial^*\Omega\setminus F .
}
\]
{The balls $B(w,r/10)$, $w\in W_r$, are disjoint, and the lower ADR bound gives}
\[
{
\#W_r\,c(n,{\rm ADR})\,r^{n-1}
\le \mathscr H^{n-1}(\partial^*\Omega\setminus F)
\le 2\,\theta(\eta)\,\eta^{n-1}
\le 2\,\theta(\eta).
}
\]
{Combining this estimate with the $\beta$-bad count proves (\ref{item:badcount}).}
\end{proof}

{
\begin{lem}[Geometric covering of the intermediate layer]\label{lem:flat-cover}
Fix $\kappa\in(0,1)$. There exist $\bar\kappa=\bar\kappa(\kappa)\in(0,\kappa)$, $\eta_0=\eta_0(n,\kappa)>0$, and
$C_\kappa=C(n,\kappa)$ with the following property. Set
\[
c_\kappa:=(4+\kappa^{-1}+\bar\kappa^{-1})^{-1}.
\]
Let $0<\eta\le\eta_0$, let $\theta$ be the function used in Proposition~\ref{prop:ADR}, and let
$G_{\kappa,\eta}\subset\partial^*\Omega$ satisfy
\[
\mathscr H^{n-1}(\partial^*\Omega\setminus G_{\kappa,\eta})\le \theta(\eta)\,\eta^{n-1}.
\]
Let $r_{\kappa,\eta}>0$ be such that, for every
$y\in G_{\kappa,\eta}$ and every $0<s\le r_{\kappa,\eta}$,
\[
\beta(y,s)\le\eta,\qquad 
\frac{\mathscr H^{n-1}(\partial^*\Omega\cap B(y,s))}{\omega_{n-1}s^{n-1}}\in(1-\eta,1+\eta).
\]
Let $0<r<r_{\rm ADR}$ with $r\le c_\kappa r_{\kappa,\eta}$, and let
$\{B(y_h,r)\}_h=\mathcal G_r\cup\mathcal R_r$ be a covering given by Proposition~\ref{prop:ADR} with 
$G_\eta=G_{\kappa,\eta}$.
After the normalization $0\in\Omega$ and $\diam(\Omega)=1$, set
\[
E_{\kappa,r}:=\{x\in\Omega:\ \kappa r<\dist(x,\partial\Omega)\le \kappa^{-1}r,\quad
(1+r)x\in\Omega\setminus\mathcal N_{\kappa r}(\partial\Omega)\}.
\]
For $B(y_h,r)\in\mathcal G_r$ set
\[
\widehat B_h:=\widehat B_{y_h,\bar\kappa,r},\qquad
E_h:=\{x\in E_{\kappa,r}:\ x\in \widehat B_h,\ (1+r)x\in \widehat B_h\}.
\]
Then
\[
E_{\kappa,r}\setminus\bigcup_{B(y_h,r)\in\mathcal G_r}E_h
\subset
\bigcup_{B(y_h,r)\in\mathcal R_r} B(y_h,C_\kappa r).
\]
\end{lem}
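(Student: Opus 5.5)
Fix $x\in E_{\kappa,r}$ and pick $y\in\partial\Omega$ with $|x-y|=\dist(x,\partial\Omega)=:d\in(\kappa r,\kappa^{-1}r]$. The covering of Proposition~\ref{prop:ADR} covers $\Ncal_{r/2}(\partial\Omega)$, so $\partial^*\Omega$ (dense in $\partial\Omega$) lies within $r/2$ of some center $y_h$, and $|x-y_h|\le (\kappa^{-1}+1)r$. If some ball of $\mathcal R_r$ has center within $(\kappa^{-1}+2)r$ of $x$, then $x\in B(y_h,C_\kappa r)$ for that bad ball, with $C_\kappa\ge\kappa^{-1}+2$, and there is nothing to prove. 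So we may assume every center within distance $(\kappa^{-1}+2)r$ of $x$ is good. The goal is then to produce one good center $y_h$ with $x,(1+r)x\in\widehat B_h$.

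\emph{Step 1: Flatness at scale $\sim\kappa^{-1}r$.}
Take a good center $y_h$ with $|y_h-y|\le r/2$. Then $y_h\in G_{\kappa,\eta}$. Since $r\le c_\kappa r_{\kappa,\eta}$, the scale $s:=(4+\kappa^{-1}+\bar\kappa^{-1})r\le r_{\kappa,\eta}$ is admissible. Hence $\beta(y_h,s)\le\eta$ and the density ratio of $\partial^*\Omega\cap B(y_h,s)$ lies in $(1-\eta,1+\eta)$.

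By the standard ``small $\beta$ plus almost-unit density plus lower ADR'' argument, $\partial\Omega\cap B(y_h,s/2)$ lies in an $o_\eta(1)s$-neighbourhood of an affine hyperplane $P$. Indeed, a point of $\partial^*\Omega$ at distance $\tau s$ from $P$ carries mass $\gtrsim(\tau s)^{n-1}$ by ADR, and this mass contributes $\gtrsim\tau^n$ to $\beta$. The almost-unit density then forces $B(y_h,s/2)\setminus\Ncal_{o(1)s}(P)$ to split into two components, one inside and one outside $\Omega$. Density $<2$ excludes two-sided boundary, and $x\in\Omega$ identifies the inner side.

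Next we compare $P$ with the normal $\nu_{y_h}$, which defines $\widehat B_h$. Since the flatness conditions hold at every scale $\le r_{\kappa,\eta}$ at $y_h$, the approximating planes at scales $s,s/2,s/4,\dots$ tilt by $o_\eta(1)$ per dyadic step. This alone does not bound the total tilt uniformly. I would therefore try first to redefine (or accept) $\widehat B_h$ using the direction normal to $P$ at scale $s$. Alternatively, we can note that Lemma~\ref{basic property u}(5) is applied with the blow-up normal, and we only need $\nu_{y_h}$ close to the normal of $P$. This can be arranged by adding to $G_{\kappa,\eta}$ (through Egorov, at the cost of $\theta(\eta)\eta^{n-1}$) the requirement that the best planes at all scales $\le r_{\kappa,\eta}$ are $\eta$-close to $\nu_{y_h}^\perp$. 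I expect this compatibility of the plane direction with $\nu_{y_h}$ to be the main obstacle.

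\emph{Step 2: Geometry of the interior ball.}
In coordinates where $y_h=0$ and $-\nu_{y_h}=e_n$, the ball $\widehat B_{y_h,\bar\kappa,r}$ has center $\frac{\bar\kappa^{-1}+\bar\kappa}{2}re_n$ and radius $\frac{\bar\kappa^{-1}-\bar\kappa}{2}r$. It is tangent to $\{x_n=0\}$ at distance $\bar\kappa r$ and contains every point with $x_n\ge 2\bar\kappa r$ (say) and $|x'|\le c\sqrt{\bar\kappa^{-1}}\,r$.

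The point $x$ has $x_n\approx d\ge\kappa r$ up to the flatness error, and $|x'|\le r$. The point $(1+r)x$ moves by at most $r|x|\le r$, since $\diam\Omega=1$. Its distance to $\partial\Omega$ exceeds $\kappa r$ because $(1+r)x\notin\Ncal_{\kappa r}(\partial\Omega)$, and it lies in the inner half of the flat picture. Hence its normal coordinate is at least $\kappa r-o_\eta(1)s$ and its horizontal coordinate is at most $2r$.

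Choose $\bar\kappa\ll\kappa^2$ and then $\eta_0(n,\kappa)$ so small that the flatness error $o_\eta(1)s$ is below $\kappa r/4$. The quadratic expansion of the sphere (the height of $\partial\widehat B$ over $|x'|\le 2r$ is $\lesssim \bar\kappa r$) then shows that both $x$ and $(1+r)x$ lie in $\widehat B_h$, and that $\widehat B_h\subset\subset\Omega$ is consistent with Lemma~\ref{basic property u}(5). Hence $x\in E_h$.

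This proves the inclusion, with $C_\kappa=\kappa^{-1}+3$, and with $\bar\kappa$, $\eta_0$ and $c_\kappa$ fixed as above.
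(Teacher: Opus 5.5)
\textbf{Verdict: there is a genuine gap, at the step you flagged yourself.} Your reduction is sound: if a bad centre lies within $(\kappa^{-1}+2)r$ of $x$ you are done, and otherwise the centre near the foot point is good. The flatness of $\partial\Omega$ at scale $s=(4+\kappa^{-1}+\bar\kappa^{-1})r$ is also fine. The problem is in Step~2.

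\textbf{The missing tilt control.} You locate $x$ and $(1+r)x$ using the plane $P$ at scale $s$. You then test membership in $\widehat B_h$ in coordinates where $-\nu_{y_h}=e_n$. Nothing in the hypotheses relates these two directions. Having $\beta\le\eta$ and density in $(1-\eta,1+\eta)$ at every scale $\le r_{\kappa,\eta}$ only lets the best plane turn by $O(\eta)$ per dyadic scale. Over the $\sim\eta^{-1}$ scales below $r$ this adds up to an $O(1)$ angle. Concretely, in $\mathbb R^2$ take the graph of $t\mapsto t\,g(|t|)$, with $g(\tau)=\min\{1,\eta\log(r/\tau)\}$ for $\tau<r$ and $g=0$ for $\tau\ge r$. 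It has $\beta\lesssim\eta$ and density $1+O(\eta)$ at every point and scale, and it is flat and horizontal at scale $r$. Yet it has slope $1$ at the origin.

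If the centres are such points, $\widehat B_h$ is a ball of radius $\approx r/(2\bar\kappa)$ tilted by $45^\circ$. Points of $E_{\kappa,r}$ at height $2\kappa r$ and horizontal offset $r/8$ on one side of $y_h$ are not in it. Centres are only required to be $r/5$-separated points of $F_\eta\cap G_\eta$, so one can put opposite tilts on the centres to the left and right of such an $x$. This appears to violate the conclusion outright. So the angle between $\nu_{y_h}$ and $P$ is a genuine missing ingredient, not a technicality.

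\textbf{Your remedies change the statement.} Option (a) alters the balls, so the Hessian bound \eqref{7.1} would have to be re-proved; Lemma~\ref{basic property u}(5) proves it only for balls built on $\nu_y$. Option (b) adds a hypothesis on $G_{\kappa,\eta}$. That is legitimate in the application, since $G_m$ can be shrunk by Egorov, but it is not the lemma as stated.

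\textbf{A smaller point.} ``Density $<2$ excludes two-sided boundary'' is not an argument. A one-scale density bound does not prevent both sides of the slab from lying in $\Omega$. You need to propagate the inside/outside labelling upward: at infinitesimal scales it is fixed by $y_h\in\partial^*\Omega$, and $\beta\le\eta$ at each dyadic scale carries it up to scale $s$.

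\textbf{Comparison with the paper.} The paper argues by contradiction and compactness. It rescales at $y_{h_i}$ by $r_i$, extracts a flat limit with half-space normal $\nu$, and uses that the compact set $S_{\kappa,\nu}$ sits inside the limit of the rescaled balls $\widehat B_{h_i}$. But the claim that those balls converge to the one built on $\nu$ requires $\nu_{y_{h_i}}\to\nu$. That is exactly the tilt control you are missing, so the paper's proof passes over the same point (and makes the same one-sidedness assumption).

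\textbf{How to close both arguments.} In the setting where the lemma is used, the fix is the containment $\widehat B_{y,\bar\kappa,s}\subset\subset\Omega$. Lemma~\ref{basic property u}(5) delivers it together with the Hessian bound, so it can be built into $G_m$ and into the hypotheses on $G_{\kappa,\eta}$. By the propagation argument, one side of the slab around $P$ is exterior. A ball of radius $\approx r/(2\bar\kappa)$ contained in $\Omega$ and passing within $\bar\kappa r$ of $y_h$ must then have its axis within angle $\lesssim\bar\kappa+o_\eta(1)$ of the normal to $P$. This shifts the depth of $x$ and $(1+r)x$ by at most $\lesssim(\bar\kappa\kappa^{-1}+o_\eta(1))r$, which your choice $\bar\kappa\ll\kappa^2$ and a small $\eta_0$ absorb.

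With that extra hypothesis (or your condition (b)), your direct argument is a sound quantitative alternative to the paper's compactness proof. It gives explicit $\bar\kappa$, $\eta_0$ and $C_\kappa$, where the paper's argument gives none.
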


\begin{proof}
Choose $\bar\kappa=\bar\kappa(\kappa)\in(0,\kappa)$ so small that, for every unit vector $\nu$, the compact set
\[
S_{\kappa,\nu}:=\bigl\{z:\ \kappa\le -z\cdot\nu\le \kappa^{-1}+2,\ |z|\le \kappa^{-1}+2\bigr\}
\]
is compactly contained in
\[
B_{\frac{\bar\kappa^{-1}-\bar\kappa}{2}}\!\left(-\frac{\bar\kappa^{-1}+\bar\kappa}{2}\nu\right).
\]
This is possible because these balls increase to the half-space $\{-z\cdot\nu>0\}$ on compact subsets as
$\bar\kappa\downarrow0$.

Choose $C_\kappa>\kappa^{-1}+2$. We claim that, after decreasing $\eta_0$ if necessary, the conclusion holds with these
constants. If not, we could find
$\eta_i\downarrow0$, admissible sets $G_{\kappa,\eta_i}$ and scales $r_{\kappa,\eta_i}$ as above, radii
$r_i\le c_\kappa r_{\kappa,\eta_i}$, good/bad coverings, and points
$x_i\in E_{\kappa,r_i}$ which stay outside the $C_\kappa r_i$-enlargement of the bad balls but do not belong to any
corresponding $E_h$. Let $y_i\in\partial\Omega$ satisfy
$|x_i-y_i|=\dist(x_i,\partial\Omega)$. Since $x_i\in E_{\kappa,r_i}$, both $x_i$ and
$(1+r_i)x_i$ have distance at least $\kappa r_i$ from the boundary; moreover
\[
\dist(x_i,\partial\Omega)\le \kappa^{-1}r_i,\qquad
\dist((1+r_i)x_i,\partial\Omega)\le(\kappa^{-1}+1)r_i,
\]
because $0\in\Omega$, $\diam(\Omega)=1$, and $x_i\in\Omega$. The covering property gives a center
$y_{h_i}$ with $|y_i-y_{h_i}|\le r_i$. If $B(y_{h_i},r_i)$ were bad, then
$|x_i-y_{h_i}|\le(\kappa^{-1}+1)r_i$, contradicting the choice of $x_i$.
Thus $B(y_{h_i},r_i)$ is good.

Rescale by $r_i$ around $y_{h_i}$. Since $y_{h_i}\in G_{\kappa,\eta_i}$ and
$r_i\le c_\kappa r_{\kappa,\eta_i}$, the defining flatness and density bounds for $G_{\kappa,\eta_i}$ hold at every scale
$s r_i$ with $s\le 4+\kappa^{-1}+\bar\kappa^{-1}$. The standard compactness argument for finite-perimeter sets then gives,
after passing to a subsequence, that the rescaled boundaries converge as Radon measures on these fixed compact sets to a
hyperplane and the rescaled sets converge in
$L^1_{\rm loc}$ to the corresponding half-space with outer normal $\nu$. The rescaled points corresponding to $x_i$ and
$(1+r_i)x_i$ are bounded and, by the distance bounds above and the flat convergence, every limit point
belongs to $S_{\kappa,\nu}$. Moreover, the rescaled interior balls
$\widehat B_{h_i}$ converge to the fixed ball
\[
B_{\frac{\bar\kappa^{-1}-\bar\kappa}{2}}\!\left(-\frac{\bar\kappa^{-1}+\bar\kappa}{2}\nu\right).
\]
Since $S_{\kappa,\nu}$ is compactly contained in this ball, both $x_i$ and $(1+r_i)x_i$ belong to $\widehat B_{h_i}$ for
all large $i$, contradicting
$x_i\notin E_{h_i}$. This proves the lemma.
\end{proof}
}

\end{document}